\documentclass[preprint,12pt]{elsarticle}

\usepackage{amssymb}
\usepackage{amsmath}
\usepackage{amsthm}
\usepackage{graphicx}
\usepackage{setspace}
\usepackage{algorithm}
\usepackage{algpseudocode}
\usepackage{subcaption}  
\usepackage{booktabs}    
\usepackage{multirow}    
\usepackage{natbib}
\usepackage{tabularx}  
\usepackage{url}

\doublespacing
\newtheorem{lemma}{Lemma}
\newtheorem{theorem}{Theorem}
\newtheorem{proposition}{Proposition}
\newtheorem{remark}{Remark}

\newcommand{\prob}{\mathbb{P}}

\newcommand{\norm}[1]{\left\lVert#1\right\rVert}




\begin{document}

\begin{frontmatter}

\title{Statistical Inference for Cumulative INAR($\infty$) Processes via Least-Squares}

\author{Ying-Li Wang\corref{cor1}}
\ead{2022310119@163.sufe.edu.cn}

\author{Xiao-Hong Duan}
\ead{isduanxh@163.com}

\author{Ping He}
\ead{pinghe@mail.shufe.edu.cn}

\cortext[cor1]{Corresponding author}

\address{School of Mathematics, Shanghai University of Finance and Economics, Shanghai, 200433, China}

\begin{abstract}
This paper investigates the cumulative Integer-Valued Autoregressive model of infinite order, denoted as INAR($\infty$), a class of processes crucial for modeling count time series and equivalent to discrete-time Hawkes processes. We propose a computationally efficient conditional least-squares (CLS) estimator to address the challenge of parameter inference in this infinite-dimensional setting. We establish the key theoretical properties of the estimator, including its consistency and asymptotic normality. A central contribution is the rigorous treatment of its large-sample distribution in a framework where the parameter dimension grows with the sample size, for which we derive the corresponding sandwich-form covariance matrix. The theoretical results are substantiated through comprehensive Monte Carlo simulations. These experiments demonstrate that the estimator's accuracy and stability systematically improve as the sample size increases, confirming its consistency. Furthermore, we show that the estimator's finite-sample distribution is well-approximated by a normal distribution, and this approximation becomes more robust with larger samples. Our work provides a complete and practical framework for statistical inference in cumulative INAR($\infty$) models. The code to reproduce the numerical experiments is publicly available at \url{https://github.com/gagawjbytw/INAR_estimation}.
\end{abstract}

\begin{keyword}
Cumulative INAR($\infty$) process \sep Discrete-time Hawkes process \sep Conditional Least Squares \sep Integer-Valued Time Series \sep Approximate Normality \sep High-Dimensional Estimator 
\MSC 62M10 \sep 62F12 \sep 60J80
\end{keyword}

\end{frontmatter}

\section{Introduction}
The INAR($\infty$) process is an integer-valued time series model that extends the traditional INAR($p$) processes to infinite order (see, for example, \citet{kirchner2016hawkes}). For $\alpha_k \ge 0$, where $k$ is a non-negative integer, let $(\epsilon_n)_{n\in\mathbb Z}$ be i.i.d. Poisson($\nu$) random variables, and let $\xi_l^{(n,k)}$ be Poisson$(\alpha_k)$ random variables. These variables are mutually independent for different $n \in \mathbb{Z}$, $k \in \mathbb{N}$, and $l \in \mathbb{N}$, and they are also independent of the sequence $(\epsilon_n)_{n\in\mathbb Z}$.

An INAR($\infty$) process is a sequence of random variables $(X_n)_{n \in \mathbb{Z}}$ that satisfies the following system of stochastic difference equations:
\begin{align*}
\epsilon_n &= X_n - \sum_{k=1}^\infty \alpha_k \circ X_{n-k}= X_n - \sum_{k=1}^\infty \sum_{l=1}^{X_{n-k}} \xi_l^{(n,k)}, \quad n \in \mathbb{Z},
\end{align*}
where the operator $``\circ"$, called the \textbf{reproduction operator}, is defined as $\alpha \circ Y := \sum_{n=1}^Y \xi_n^{(\alpha)}$, for a random variable $Y$ that takes non-negative integer values and a constant $\alpha \ge 0$. Here, $\left(\xi_n^{(\alpha)}\right)_{n\in\mathbb N}$ are i.i.d. Poisson$(\alpha)$ random variables and are independent of $Y$. We refer to $\xi_n^{(\alpha)}$ as the \textbf{offspring variable}, and to $\left(\xi_n^{(\alpha)}\right)$ as the \textbf{offspring sequence}. Additionally, we call $\nu$ the \textbf{immigration parameter}, $(\epsilon_n)$ the \textbf{immigration sequence}, and $\alpha_k \ge 0$ the \textbf{reproduction coefficient} for each non-negative integer $k$.

A cumulative INAR($\infty$) process, also known as a discrete Hawkes process, is defined by $N_n = \sum_{s=1}^n X_s$. Hawkes processes, introduced by \citet{hawkes1971spectra}, are continuous-time self-exciting point processes widely used in various fields. A general Hawkes process is a simple point process $N$ admitting an $\mathcal F_t^{-\infty}$ intensity 
\[
  \lambda_t:=\lambda\left( \int_{-\infty}^th(t-s)N(ds) \right),
\]
where $\lambda(\cdot):\mathbb R^+\rightarrow\mathbb R^+$ is locally integrable and left continuous, $h(\cdot):\mathbb R^+\rightarrow\mathbb R^+$, and we always assume that $\norm h_{L^1}=\int_0^\infty h(t)dt<\infty$. We always assume that $N(-\infty,0]=0$, i.e. the Hawkes process has empty history. In the literature, $h(\cdot)$ and $\lambda(\cdot)$ are usually referred to as the exciting function and the rate function, respectively. The Hawkes process is linear if $\lambda(\cdot)$ is linear and it is nonlinear otherwise, in the linear case, the stochastic intensity can be written as 
\[
  \lambda_t=\nu+\int_0^{t-}h(t-s)N(ds).
\]

Discrete-time analogs, such as cumulative INAR($\infty$) processes, offer similar modeling capabilities with a focus on count data observed at fixed time intervals. Under certain conditions, the Poisson autoregressive process can be viewed as an INAR($\infty$) process with Poisson offspring. For a comprehensive discussion of Poisson autoregressive models and their connections to INAR and Hawkes processes, refer to \citet{fokianos2021multivariate} and \citet{huang2023nonlinear}. It is easy to see that if we let an INAR($\infty$) process $(X_n)_{n \geq 1}$ start from time 1 ($X_1\sim \text{Poisson}(\nu)$), it can also be defined by:
\begin{equation}\label{process_definition}
\lambda_n = \nu + \sum_{s=1}^{n-1} \alpha_{n-s} X_s,
\end{equation}
where $\nu > 0$ is the immigration rate, and $(\alpha_n)_{n \geq 1} \in \ell^1$ represents the offspring distribution, with $\alpha_n \geq 0$ for all $n \in \mathbb{N}$. Given the history $\mathcal{F}_{n-1}$, the count $X_n$ follows a Poisson distribution with parameter $\lambda_n$, i.e.,
\[
X_n \mid \mathcal{F}_{n-1} \sim \text{Poisson}(\lambda_n).
\]
INAR($\infty$) processes are very powerful tools for estimating Hawkes processes; see for example, \citet{kirchner2017estimation}. The INAR($\infty$) process is in fact a series of discretized time observations of a continuous-time linear Hawkes process, where the exciting function is
\begin{equation}\label{generalizedexpressionofdiscrete}
h(t) = \sum_{k=1}^\infty \alpha_k \delta_{\{t=k\}},
\end{equation}
where $\delta$ is the generalized Delta function. This can be understood from the immigration-birth representation of the continuous-time Hawkes process. Consider the population of a region: if an immigrant arrives at time $t$ (either as a descendant of a former immigrant or from another region), the number of descendants of the immigrant at time $t+n$ follows a Poisson distribution with parameter $\alpha_n$. Denote $X_n$ as the increase in population volume in the time interval $(n-1, n]$; then it consists of two parts:
\begin{enumerate}
    \item The first part is the number of new immigrants from other regions, which follows a Poisson distribution with parameter $\nu$.
    \item The second part is the number of descendants from before time $n$, which follows a Poisson distribution with parameter $\sum_{k=1}^{n-1} \alpha_k X_{n-k}$.
\end{enumerate}
As a result, $X_n \mid \mathcal{F}_{n-1} \sim \text{Poisson}(\nu + \sum_{k=1}^{n-1} \alpha_k X_{n-k})$.

In this paper, we develop a comprehensive framework for the estimation and inference of the cumulative INAR($\infty$) process. Our primary contribution is the introduction of a computationally straightforward conditional least-squares (CLS) estimator, which circumvents the complexities often associated with likelihood-based methods for infinite-order models. The novelty of our approach lies in three key areas:

\begin{enumerate}
    \item We first establish a new estimation framework by defining a least-squares contrast function and showing its equivalence to a valid distance metric in the parameter space.
    
    \item We rigorously derive the large-sample properties of our CLS estimator, proving its consistency. Critically, we address the theoretical challenge of an estimator whose dimension grows with the sample size $T$.
    
    \item We establish the \emph{approximate normality} of the estimator for large but finite $T$, providing a practical and theoretically sound basis for statistical inference, such as constructing confidence intervals and hypothesis tests.
\end{enumerate}

Our work thus provides a complete and accessible methodology for analyzing discrete-time, self-exciting count data, supported by both rigorous proofs and extensive numerical validation.

\section{Main Results}
The technical method in this paper is inspired by \citet{reynaud2010adaptive}. Let us give some notations first. In this paper, \( \|\cdot\|_1 \) and \( \|\cdot\|_2 \) denote the usual \( \ell^1 \)-norm and \( \ell^2 \)-norm, respectively. 
We also set $(A_n)_{n\ge1}\in \ell^1$ as the sequence defined on $\mathbb N$ by $A_n=\sum_{k=1}^\infty(\alpha)_n^{*k}$, where $*$ denotes the discrete convolution which means for two non-negative sequences $(q_n)_{n\ge1}$, $(m_n)_{n\ge1}\in \ell^1$, $(q*m)(n)=\sum_{s=1}^{n-1}q_sm_{n-s}$, and $\alpha^{*(k+1)}$ denotes the discrete convolution of $\alpha^{*k}$ with $\alpha$, i.e., $\alpha^{*(k+1)}=\alpha*\alpha^{*k}$. $(A_n)_{n\ge1}$ is well defined since $\norm {\alpha}_1<1$.

\subsection{Problem Formulation}

The parameter we aim to estimate is $s = (\nu, \alpha)$, where \( \alpha = (\alpha_1, \alpha_2, \cdots) \). Since observational data are always finite, we introduce a sufficiently large integer \( T \) (with \( T \) increasing as the data length increases). Then, we estimate $s = (\nu, \alpha_1, \alpha_2, \cdots, \alpha_{T-1})$.
We assume $\sum_{k=1}^{T-1} \alpha_k < 1$ to ensure the stationarity of the process.

The parameter space is a Euclidean space 
\[
  \mathfrak{l}^2 = \{f : f = (\mu, \beta) = (\mu, \beta_1, \beta_2, \cdots, \beta_{T-1})\}
\]
equipped with the inner product \( \langle \cdot, \cdot \rangle \), where for \( f = (\mu, \beta) \) and \( g = (\xi, \gamma) \) in \( \mathfrak{l}^2 \),
$\langle f, g \rangle = \mu \xi + \sum_{k=1}^{T-1} \beta_k \gamma_k.$

\subsection{Least-Squares Contrast}

For \( f = (\mu, \beta) \in \mathfrak{l}^2 \), we define the intensity candidates as
\[
\Phi_f(n) := \mu + \sum_{k=1}^{n-1} \beta_k X_{n-k},
\]
and, in particular, \( \Phi_s(n) = \lambda_n \). We want to estimate the intensity \( \Phi_s(n) \). The estimator \( \Phi_f(n) \) should be sufficiently close to \( \Phi_s(n) \). For every \( f \in \mathfrak{l}^2 \), we define a Least-Squares Contrast:
\[
\gamma_T(f) := -\frac{2}{T} \sum_{n=1}^T \Phi_f(n) X_n + \frac{1}{T} \sum_{n=1}^T \Phi_f^2(n).
\]
Now, let's prove that \( \gamma_T(f) \) can be used as a metric to measure the distance between \( \Phi_f(n) \) and \( \Phi_s(n) \). First, for every \( f \in \mathfrak{l}^2 \), we define
\[
  D_T^2(f):=\frac{1}{T}\sum_{n=1}^T \Phi_f^2(n)\ \text{and}\ \|f\|_D := \sqrt{\mathbb{E}[D_T^2(f)]}.
\]
Proposition \ref{prop:quadraticform} guarantees that \( D_T^2 \) is a quadratic form and that \( \|f\|_D \) is equivalent to \( \|f\|_2 \). To prove Proposition \ref{prop:quadraticform}, we first introduce some technical lemmas.

\begin{lemma}[Solution of Discrete Renewal Equations]\label{discreterenewalequation}
  Given a non-negative sequence $(\alpha_n)_{n\ge1} \in \ell^1$ and two non-negative sequences $(x_n)_{n\ge1}$, $(y_n)_{n\ge1}$, the following equation
  \begin{equation}\label{seriesconvolution}
    x_n = y_n + \sum_{s=1}^{n-1} \alpha_s x_{n-s}
  \end{equation}
  has the unique solution $x_n = \left(y + y * A\right)(n) = y_n + \sum_{i=1}^{n-1} A_i y_{n-i}$.
\end{lemma}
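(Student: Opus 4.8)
The plan is to separate the claim into two parts: \emph{uniqueness} of a solution to the recursion \eqref{seriesconvolution}, and \emph{verification} that the proposed closed form $x = y + y*A$ is indeed that solution. Throughout I would work in the convolution notation already introduced, rewriting \eqref{seriesconvolution} compactly as $x = y + \alpha * x$, and I would record at the outset the two structural facts about the convolution $*$ that drive everything: it is commutative and associative. Both follow by the substitution $t = n-s$ and by reindexing the iterated sum $((q*m)*p)(n)$ as the symmetric triple sum $\sum_{u+v+w=n,\,u,v,w\ge1} q_u m_v p_w$; the same bookkeeping shows $\alpha^{*k}(n)=0$ whenever $k>n$, so that $A_n=\sum_{k=1}^{n}\alpha^{*k}(n)$ is a \emph{finite} sum for each $n$ and $A$ is well defined without any convergence subtlety.

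For uniqueness, I would observe that the recursion is triangular: the sum $\sum_{s=1}^{n-1}\alpha_s x_{n-s}$ involves only $x_1,\dots,x_{n-1}$, so $x_1=y_1$ (empty sum) and, by induction, each $x_n$ is uniquely determined by $y_n$ and the previously fixed values. Hence at most one sequence can satisfy \eqref{seriesconvolution}. For existence I would first establish the fixed-point identity
\[
  A = \alpha + \alpha * A,
\]
by splitting $A=\sum_{k\ge1}\alpha^{*k}$ into its first term and the rest, writing $\sum_{k\ge2}\alpha^{*k}=\alpha*\sum_{j\ge1}\alpha^{*j}$; the interchange of the convolution with the sum over $j$ is legitimate because, for fixed $n$, only finitely many $j$ contribute. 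I would then substitute the candidate $x=y+y*A$ into the right-hand side of $x=y+\alpha*x$ and simplify using commutativity, associativity, distributivity over addition, and the identity above:
\[
  y + \alpha*(y+y*A) = y + y*\alpha + y*(\alpha*A) = y + y*(\alpha+\alpha*A) = y + y*A .
\]
Together with uniqueness, this identifies $x=y+y*A = y_n + \sum_{i=1}^{n-1}A_i y_{n-i}$ as the solution.

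The routine parts are the two reindexing computations; the one place that deserves genuine care---and which I expect to be the main obstacle---is justifying the manipulations involving the \emph{infinite} series $A=\sum_k\alpha^{*k}$, namely pulling $\alpha*$ through the sum and rearranging $\sum_{k\ge2}$. The clean way around this is precisely the finite-support observation above: because $\alpha^{*k}(n)$ vanishes for $k>n$, every identity, when evaluated at a fixed index $n$, is an identity among finitely many terms, so commutativity, associativity and the sum interchange are all elementary and require no appeal to $\ell^1$-summability beyond $\norm{\alpha}_1<1$ guaranteeing $A\in\ell^1$. As an alternative that sidesteps the series algebra entirely, one could verify the closed form directly by strong induction on $n$, substituting $x_{n-s}=y_{n-s}+\sum_{i}A_i y_{n-s-i}$ into the recursion and collecting the coefficient of each $y_{n-m}$, which reduces to the scalar identity $A_m=\alpha_m+\sum_{s=1}^{m-1}\alpha_s A_{m-s}$---the componentwise form of $A=\alpha+\alpha*A$.
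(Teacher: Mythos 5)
Your proof is correct, but it takes a genuinely different route from the paper. The paper argues via generating functions: it takes the $z$-transform of $x = y + \alpha * x$, solves $\mathcal{G}(x) = \mathcal{G}(y)/(1-\mathcal{G}(\alpha))$, identifies $1/(1-\mathcal{G}(\alpha))$ with the generating function of $1 + \sum_n A_n z^n$ via the geometric series, and inverts. You instead split the claim into uniqueness (immediate from the triangular structure of the recursion, by induction on $n$) and existence (direct substitution of the candidate $y + y*A$, reduced to the fixed-point identity $A = \alpha + \alpha*A$). The two approaches are essentially transform-side and sequence-side versions of the same algebra, but yours buys several things the paper's does not: it makes uniqueness explicit rather than implicit in the invertibility of the transform; it avoids the analytic bookkeeping the paper waves at (convergence of the power series, the region where $\abs{\mathcal{G}(\alpha)(z)}<1$, validity of the inversion), replacing it with the clean observation that $\alpha^{*k}(n)=0$ for $k>n$ so every identity is a finite-sum identity; and it correctly isolates that $\norm{\alpha}_1<1$ is needed only for $A\in\ell^1$, not for the lemma's identity itself. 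The paper's route is shorter to state and connects to standard renewal-theoretic machinery, but yours is the more self-contained and rigorous argument; your closing remark that everything reduces to the componentwise identity $A_m=\alpha_m+\sum_{s=1}^{m-1}\alpha_s A_{m-s}$ is exactly the right way to see why no summability hypothesis is ever really in play.
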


\begin{proof}
  We provide the proof in \ref{sec:appendix_proof_discrete_renewal_equation}.
\end{proof}

 From Lemma \ref{discreterenewalequation}, we can easily obtain an upper bound for $\mathbb E[\lambda_n]$. In fact, taking the expectation on both sides of \eqref{process_definition}, we have $\mathbb E[X_n] = \nu + \sum_{s=1}^{n-1} \alpha_{n-s} \mathbb E[X_s].$
Using Lemma \ref{discreterenewalequation}, it follows that
\begin{equation}\label{upperboundlambda}
  \mathbb E[\lambda_n]=\mathbb E[X_n]\le \frac{\nu}{1-\norm \alpha_1}.
\end{equation}
An upper bound of $\mathbb E[X_n^2]$ is obtained when $\norm \alpha_2^2<\frac12$,
\begin{align*}
   \mathbb E[X_n^2]-\mathbb E[\lambda_n]
   =\mathbb E[\lambda_n^2]
   =\mathbb E\left[ \left( \nu+\sum_{k=1}^{n-1}\alpha_kX_{n-k} \right)^2 \right]\le 2\mathbb E\left[ \nu^2+\sum_{k=1}^{n-1}\alpha_k^2X_{n-k}^2 \right].
\end{align*}
Therefore, 
\[
  \mathbb E[X_n^2]\le \frac{2\nu^2+\mathbb E[\lambda_n]}{1-2\norm \alpha_2^2}\le \frac{2\nu^2(1-\norm \alpha_1)+\nu}{(1-2\norm\alpha_2^2)(1-\norm\alpha_1)}.
\]

\begin{remark}
  We believe that $\|\alpha\|_2^2 < \frac{1}{2}$ appears to be a technical requirement for deriving the upper bound. In the numerical experiments, we also set $\alpha_1=0.8$ and $\alpha_n=0$ for $n \geq 2$. Our results show that the relative error falls within an acceptable range, as defined in our analysis.
\end{remark}

\begin{lemma}\label{boundlemma}
  Let \( (N_n)_{n\ge1} \) be a cumulative INAR$(\infty)$ process with $\norm \alpha_2^2<\frac12$, and \( \beta = (\beta_1, \beta_2, \dots) \in \ell^1 \) with \( \beta_k \geq 0 \) for \( k \geq 1 \). Then, for every \( n\in\mathbb N \),
  \[
    \mathbb{E}\left[ \left( \sum_{k=1}^{n-1} \beta_k X_{n-k} \right)^2 \right] \leq \frac{2\nu^2(1-\norm \alpha_1)+\nu}{(1-2\norm\alpha_2^2)(1-\norm\alpha_1)} \left( \sum_{k=1}^{n-1} \beta_k \right)^2.
  \]
\end{lemma}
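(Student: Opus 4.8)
The plan is to reduce the claim directly to the uniform second-moment estimate $\mathbb{E}[X_m^2]\le C$, with $C=\frac{2\nu^2(1-\|\alpha\|_1)+\nu}{(1-2\|\alpha\|_2^2)(1-\|\alpha\|_1)}$, that was established just before the lemma under the assumption $\|\alpha\|_2^2<\frac{1}{2}$, while exploiting the non-negativity of the coefficients $\beta_k$. First I would expand the square and use linearity of expectation to write
\[
\mathbb{E}\left[\left(\sum_{k=1}^{n-1}\beta_k X_{n-k}\right)^2\right] = \sum_{j=1}^{n-1}\sum_{k=1}^{n-1}\beta_j\beta_k\,\mathbb{E}[X_{n-j}X_{n-k}],
\]
which turns the problem into one of controlling the pairwise cross-moments $\mathbb{E}[X_{n-j}X_{n-k}]$.

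Next, for each pair $(j,k)$ I would apply the Cauchy--Schwarz inequality to obtain $\mathbb{E}[X_{n-j}X_{n-k}]\le\sqrt{\mathbb{E}[X_{n-j}^2]\,\mathbb{E}[X_{n-k}^2]}$, and then invoke the uniform bound $\mathbb{E}[X_m^2]\le C$. Since all the relevant indices $n-j$ and $n-k$ lie in $\{1,\dots,n-1\}$ and the bound $C$ is independent of $j$, $k$, and $n$, every cross-moment satisfies $\mathbb{E}[X_{n-j}X_{n-k}]\le C$, which decouples the double sum. Because $\beta_j,\beta_k\ge0$, I can then factor out $C$ and recombine the sum into a product,
\[
\sum_{j=1}^{n-1}\sum_{k=1}^{n-1}\beta_j\beta_k\,\mathbb{E}[X_{n-j}X_{n-k}] \le C\sum_{j=1}^{n-1}\sum_{k=1}^{n-1}\beta_j\beta_k = C\left(\sum_{k=1}^{n-1}\beta_k\right)^2,
\]
which is precisely the asserted inequality.

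There is no serious analytic obstacle here: the argument amounts to two applications of elementary inequalities layered on top of the preceding moment estimate. The only point that genuinely requires care is the sign condition $\beta_k\ge0$, since replacing each cross-moment by its uniform upper bound $C$ and factoring the double sum into $(\sum_k\beta_k)^2$ is valid for an upper bound only when all coefficients are non-negative. I would also verify explicitly that the stated bound $C$ holds uniformly over all indices $m\ge1$, including the transient indices near the start of the process, because the decoupling step relies on this uniformity rather than on an $n$-dependent estimate.
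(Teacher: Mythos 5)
Your proposal is correct and takes essentially the same route as the paper: both arguments reduce the claim to the uniform second-moment bound $\mathbb{E}[X_m^2]\le C$ derived just before the lemma, combined with one application of the Cauchy--Schwarz inequality and the non-negativity of the $\beta_k$. The only cosmetic difference is that you expand the square and bound each cross-moment $\mathbb{E}[X_{n-j}X_{n-k}]$ individually, whereas the paper applies a weighted Cauchy--Schwarz directly to the sum $\sum_k \beta_k^{1/2}\cdot\beta_k^{1/2}X_{n-k}$; both yield the identical constant.
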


\begin{proof}
  We provide the proof in \ref{sec:appendix_proof_boundlemma}.
\end{proof}

\begin{proposition}\label{prop:quadraticform}
  \( D_T^2 \) is a quadratic form on \( \mathfrak{l}^2 \). Assume $\norm \alpha_2^2<\frac12$, the squared expectation of \( D_T^2 \) is \( \|\cdot\|_D^2 \), and it satisfies the following inequality:
  \begin{equation}\label{equivalentnorm}
    L\|f\|_2 \leq \|f\|_D \leq K\|f\|_2,
  \end{equation}
  where 
  \[
    L^2 = \min \left\lbrace   \dfrac{1}{1+\nu T(T-1)(1+\| \alpha \|_1)^2},\dfrac{\nu  }{2 T (1-\| \alpha \|_1)(1+\| \alpha \|_1)^2}\right\rbrace,
  \]
  and 
  \[
    K^2 = \max\left\{ 2, \frac{T-1}{2}\left[ \frac{2\nu^2}{(1-\norm \alpha_1)^2}+\frac{2\nu^2(1-\norm\alpha_1)+\nu}{(1-2\norm \alpha_2^2)(1-\norm\alpha_1)} \right]\right\}.
  \]
\end{proposition}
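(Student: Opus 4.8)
The plan is to view $\mathbb{E}[D_T^2(\cdot)]$ as a deterministic quadratic form and to read \eqref{equivalentnorm} as a two-sided bound on its eigenvalues. Because $\Phi_f(n)=\mu+\sum_{k=1}^{n-1}\beta_kX_{n-k}$ is linear in $f=(\mu,\beta)$, I would write $\Phi_f(n)=\langle f,V_n\rangle$ with the random vector $V_n=(1,X_{n-1},\dots,X_1,0,\dots,0)$, so that $D_T^2(f)=\langle f,G_Tf\rangle$ with $G_T=\frac1T\sum_{n=1}^TV_nV_n^{\top}$ symmetric and positive semidefinite; this already shows $D_T^2$ is a quadratic form and that $\|f\|_D^2=\langle f,\mathbb{E}[G_T]f\rangle$. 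Everything then reduces to controlling $\mathbb{E}[\Phi_f^2(n)]=(\mathbb{E}[\Phi_f(n)])^2+\mathrm{Var}(\Phi_f(n))$ uniformly in $n$ and averaging. For the upper bound I would bound the two pieces separately: by $(a+b)^2\le 2a^2+2b^2$, $X_{n-k}\ge0$, the mean bound \eqref{upperboundlambda}, and Cauchy--Schwarz, $(\mathbb{E}[\Phi_f(n)])^2\le 2\mu^2+\tfrac{2\nu^2}{(1-\|\alpha\|_1)^2}(n-1)\|\beta\|_2^2$; and, using monotonicity in $|\beta_k|$ (again since $X\ge0$) together with Lemma \ref{boundlemma}, $\mathrm{Var}(\Phi_f(n))\le\mathbb{E}[(\sum_k|\beta_k|X_{n-k})^2]\le C(n-1)\|\beta\|_2^2$, where $C$ is the constant of Lemma \ref{boundlemma}. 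Adding and averaging $\frac1T\sum_{n=1}^T(n-1)=\frac{T-1}{2}$ reproduces exactly the constant $K^2$.

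The lower bound is the heart of the matter, and I would treat the $\beta$-directions and the $\mu$-direction by different means. For $\beta$ I discard the mean and bound the variance from below through an innovation representation: setting $\eta_m:=X_m-\lambda_m$ (martingale differences with $\mathbb{E}[\eta_m^2]=\mathbb{E}[X_m]\ge\nu$ and $\mathbb{E}[\eta_m\eta_{m'}]=0$), Lemma \ref{discreterenewalequation} applied to the centred process yields $X_m-\mathbb{E}[X_m]=\eta_m+\sum_{i=1}^{m-1}A_i\eta_{m-i}$, hence $\Phi_f(n)-\mathbb{E}[\Phi_f(n)]=\sum_{l=1}^{n-1}c_l\eta_l$ with $c_{n-k}=\beta_k+(A*\beta)(k)$. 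Orthogonality gives $\mathrm{Var}(\Phi_f(n))=\sum_l c_l^2\,\mathbb{E}[X_l]\ge\nu\|c\|_2^2$, and since the map $f\mapsto f+A*f$ is inverted by $f\mapsto f-\alpha*f$ (the defining relation $A=\alpha+\alpha*A$), Young's inequality gives $\|\beta\|_2\le(1+\|\alpha\|_1)\|c\|_2$; taking $n=T$ then yields $\mathbb{E}[D_T^2(f)]\ge\frac{\nu}{T(1+\|\alpha\|_1)^2}\|\beta\|_2^2$, which is the second entry of $L^2$. For $\mu$ I would average the identity $\mu=\mathbb{E}[\Phi_f(n)]-\sum_k\beta_k\mathbb{E}[X_{n-k}]$ over $n$, bound the averaged first term by $\mathbb{E}[D_T^2(f)]$ via Jensen, and bound the averaged second term by a multiple of $\|\beta\|_2^2$ and hence again by $\mathbb{E}[D_T^2(f)]$; solving the resulting scalar inequality for $\mu^2$ gives the first entry of $L^2$, and combining the two bounds produces $\mathbb{E}[D_T^2(f)]\ge L^2\|f\|_2^2$.

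The hard part will be this last decoupling of $\mu$ from $\beta$. The constant regressor is nearly collinear with the strictly-positive-mean counts $X_{n-k}$, so a choice of $\beta$ can almost cancel $\mu$ inside every $\mathbb{E}[\Phi_f(n)]$; the quadratic form is therefore genuinely small in the $\mu$-direction, which is exactly why $L^2$ must appear as a minimum of two quantities of different orders in $T$ rather than a single constant. Making this quantitative --- controlling how much variance a near-cancelling $\beta$ is forced to create, and propagating the constants cleanly through the convolution inversion and the Cauchy--Schwarz steps --- is where the bookkeeping that yields the precise expressions for $L^2$ (and the factor $\nu T(T-1)(1+\|\alpha\|_1)^2$ in particular) resides.
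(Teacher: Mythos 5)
Your upper bound is essentially the paper's: split $\mathbb{E}[\Phi_f^2(n)]$ into squared mean plus variance (or use $(a+b)^2\le 2a^2+2b^2$), invoke \eqref{upperboundlambda} and Lemma~\ref{boundlemma}, and average $\frac1T\sum_{n=1}^T(n-1)=\frac{T-1}{2}$; that part does reproduce $K^2$ exactly. Your lower bound, however, takes a genuinely different route. The paper works in the frequency domain: it embeds the count sequence into a continuous-time Hawkes process with exciting function \eqref{generalizedexpressionofdiscrete}, lower-bounds the Bartlett spectral density by $\frac{\nu}{2\pi(1-\|\alpha\|_1)(1+\|\alpha\|_1)^2}$, and applies Plancherel to get $\mathrm{Var}(\sum_k\beta_kX_{n-k})\ge\frac{\nu}{(1-\|\alpha\|_1)(1+\|\alpha\|_1)^2}\|\beta\|_2^2$; the $\mu$--$\beta$ coupling is then handled not by Jensen but by the weighted inequality $(a+b)^2\ge(1-\theta)a^2+(1-\tfrac1\theta)b^2$ with $\theta$ optimized so that the negative $\beta$-contribution is absorbed by half the variance term. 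Your time-domain alternative --- the innovation representation $X_m-\mathbb{E}[X_m]=\eta_m+\sum_iA_i\eta_{m-i}$ from Lemma~\ref{discreterenewalequation}, orthogonality of the martingale differences, and inversion of $c=\beta+A*\beta$ via $\beta=c-\alpha*c$ with Young's inequality --- is sound and arguably more elementary and self-contained (it avoids importing the stationary Bartlett spectrum for a process started from empty history), and I verified that each step goes through.

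The one substantive caveat is your claim that this ``reproduces exactly'' the stated $L^2$: it does not. Your innovation-variance bound uses only $\mathbb{E}[\eta_l^2]=\mathbb{E}[\lambda_l]\ge\nu$ and yields $\mathrm{Var}(\Phi_f(n))\ge\frac{\nu}{(1+\|\alpha\|_1)^2}\|\beta\|_2^2$, missing the factor $(1-\|\alpha\|_1)^{-1}$ that the spectral argument supplies (for $\|\alpha\|_1>\frac12$ your constant is strictly smaller than the paper's). Likewise your Jensen-based decoupling for $\mu$ gives a denominator of the form $2+C\,\nu T(T-1)(1+\|\alpha\|_1)^2$ rather than $1+\nu T(T-1)(1+\|\alpha\|_1)^2$, and combining the two directional bounds via $\max\{a,b\}\ge\frac{a+b}{2}$ costs another factor of $2$. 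None of this affects the substance of the proposition --- norm equivalence with constants of the stated order in $T$ --- but if the specific expressions for $L^2$ are to be certified, you either need to carry out the paper's optimization over $\theta$ and its spectral lower bound, or restate $L^2$ with the (slightly different) constants your argument actually produces.
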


\begin{proof}
  We provide the proof in \ref{sec:appendix_proof_quadraticform}.
\end{proof}

Then we can give our main theorem.

\begin{theorem}\label{thm:maintheorem}
  Let $(N_n)_{n\ge1}$ be a cumulative INAR($\infty$) process with $\norm \alpha_1<1$ and $\norm \alpha_2^2<\frac12$, for any $f\in\mathfrak{l}^2$, define
  \[
    \gamma_T(f) :=-\frac2T\sum_{n=1}^T\Phi_f(n)X_n+\frac1T\sum_{n=1}^T\Phi_f^2(n),
  \]
  then $\gamma_T(f)$ is a contrast, i.e. $\mathbb E[\gamma_T(f)]$ reaches its minimum when $f=s$.
\end{theorem}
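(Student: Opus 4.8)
The plan is to show that, up to an additive constant that does not depend on $f$, the expected contrast $\mathbb{E}[\gamma_T(f)]$ equals the squared $\|\cdot\|_D$-distance between the candidate intensity $\Phi_f$ and the true intensity $\Phi_s$. Since this distance is non-negative and, by the norm equivalence of Proposition \ref{prop:quadraticform}, vanishes only at $f=s$, the minimum is attained exactly there.

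First I would evaluate $\mathbb{E}[\gamma_T(f)]$ term by term. The quadratic term $\frac{1}{T}\sum_n\mathbb{E}[\Phi_f^2(n)]$ needs no manipulation, so the only substantive step is the cross term $\mathbb{E}[\Phi_f(n)X_n]$. The key observation is that $\Phi_f(n)=\mu+\sum_{k=1}^{n-1}\beta_k X_{n-k}$ depends only on $X_1,\dots,X_{n-1}$ and is therefore $\mathcal{F}_{n-1}$-measurable. Using the defining property $\mathbb{E}[X_n\mid\mathcal{F}_{n-1}]=\lambda_n=\Phi_s(n)$ together with the tower rule gives
\[
  \mathbb{E}[\Phi_f(n)X_n]=\mathbb{E}\big[\Phi_f(n)\,\mathbb{E}[X_n\mid\mathcal{F}_{n-1}]\big]=\mathbb{E}[\Phi_f(n)\Phi_s(n)].
\]
This conditioning identity is the heart of the argument; it converts the empirical cross term into a genuine inner product between $\Phi_f$ and $\Phi_s$.

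Substituting back, I obtain $\mathbb{E}[\gamma_T(f)]=\frac{1}{T}\sum_{n=1}^T\mathbb{E}[\Phi_f^2(n)-2\Phi_f(n)\Phi_s(n)]$. Adding and subtracting the $f$-independent constant $\frac{1}{T}\sum_{n=1}^T\mathbb{E}[\Phi_s^2(n)]$ completes the square:
\[
  \mathbb{E}[\gamma_T(f)]=\frac{1}{T}\sum_{n=1}^T\mathbb{E}\big[(\Phi_f(n)-\Phi_s(n))^2\big]-\frac{1}{T}\sum_{n=1}^T\mathbb{E}[\Phi_s^2(n)].
\]
Because $\Phi$ is linear in its parameter, $\Phi_f(n)-\Phi_s(n)=\Phi_{f-s}(n)$, so the first sum is precisely $\mathbb{E}[D_T^2(f-s)]=\|f-s\|_D^2$, while the second is a constant independent of $f$.

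It then remains to invoke the lower bound $\|f-s\|_D\ge L\|f-s\|_2\ge 0$ from Proposition \ref{prop:quadraticform}, which shows that $\mathbb{E}[\gamma_T(f)]$ is minimized exactly when $\|f-s\|_2=0$, that is, $f=s$ (and, since $L>0$, this minimizer is unique). I expect the only genuinely delicate point to be the measurability and conditioning step above; the remaining manipulations are elementary algebra combined with the already-established norm equivalence. As a minor technical matter, I would confirm that all expectations are finite so the interchanges are legitimate, which follows from the second-moment bounds in \eqref{upperboundlambda} and Lemma \ref{boundlemma}.
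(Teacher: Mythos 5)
Your proposal is correct and follows essentially the same route as the paper's proof: the tower-rule identity $\mathbb{E}[\Phi_f(n)X_n]=\mathbb{E}[\Phi_f(n)\Phi_s(n)]$, completing the square to get $\mathbb{E}[\gamma_T(f)]=\|f-s\|_D^2-\|s\|_D^2$, and invoking Proposition~\ref{prop:quadraticform} to conclude. Your explicit remarks on uniqueness via $L>0$ and on finiteness of the moments are welcome additions but do not change the argument.
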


\begin{proof}
  We provide the proof in \ref{sec:appendix_proof_maintheorem}.
\end{proof}

Finally, we will give the exact expression of $\gamma_T(f)$ as follows,
\begin{equation*}
	\begin{aligned}
	\gamma_T(f) = & -\frac{2}{T}\sum_{n=1}^{T}\Phi_f(n)X_n + \frac{1}{T}\sum_{n=1}^{T}\Phi^2_f(n) \\
	= & -\frac{2}{T}\sum_{n=1}^{T}\left( \mu + \sum_{k=1}^{n-1}\beta_kX_{n-k}\right) X_n+
	\frac{1}{T}\sum_{n=1}^{T}\left( \mu + \sum_{k=1}^{n-1}\beta_kX_{n-k} \right) ^2\\
	= & -2\left[  \left( \frac{1}{T}\sum_{n=1}^{T}X_n\right) \mu + \sum_{k=1}^{T-1}\left( \frac{1}{T} \sum_{n=k+1}^{T}X_{n-k}X_n\right) \beta_k \right]  \\
	& + \mu ^2 + \sum_{k=1}^{T-1} \beta_k^2 \left(\frac{1}{T}\sum_{n=k+1}^{T}X_{n-k}^2 \right) +
	2\sum_{k=1}^{T-1}\mu \beta_k\left(\frac{1}{T}\sum_{n=k+1}^{T}X_{n-k} \right) \\
	& +2\sum_{i=1}^{T-1}\sum_{j=i+1}^{T-1}\beta_i\beta_j\left( \frac{1}{T}\sum_{n=j+1}^{T}X_{n-i}X_{n-j}\right) 
	.
	\end{aligned}
\end{equation*}
Assume $\boldsymbol{\theta}_T$ to be the $T$-dimensional vector consisting of the parameters to be estimated,
\[
\boldsymbol{\theta}_T = \left( \mu, \beta_1, \cdots, \beta_k, \cdots, \beta_{T-1} \right)^\top.
\]
  then we can rewrite $ \gamma_T(f)  $ into the following form:
	\begin{equation*}
	\gamma_T(f) = -2 \boldsymbol{\theta}_T^\top \boldsymbol{b} + \boldsymbol{\theta}_T^{\top} \boldsymbol{Y} \boldsymbol{\theta}_T,
	\end{equation*}
	where
	\[
\boldsymbol{b} = \left( 
\frac{1}{T}N_T, \; 
\frac{1}{T}\sum_{n=2}^{T}X_{n-1}X_n, \; 
\cdots, \; 
\frac{1}{T}\sum_{n=k+1}^{T}X_{n-k}X_n, \; 
\cdots, \; 
\frac{1}{T}\sum_{n=T}^{T}X_1X_n 
\right)^\top,
\]
  and
	\begin{equation*}
  \resizebox{\columnwidth}{!}{$
	\boldsymbol{Y} = \left(
	\begin{array}{cccccc}
	1   & \frac{1}{T}\sum_{n=2}^{T}X_{n-1} &\cdots & \frac{1}{T}\sum_{n=k+1}^{T}X_{n-k} & \cdots & \frac{1}{T}X_1\\
	\frac{1}{T}\sum_{n=2}^{T}X_{n-1} & \frac{1}{T}\sum_{n=2}^{T}X_{n-1}^2 & \cdots & \frac{1}{T}\sum_{n=k+1}^{T}X_{n-k}X_{n-1} & \cdots & \frac{1}{T}X_1X_{T-1}\\
  \frac{1}{T}\sum_{n=3}^{T}X_{n-2} & \frac{1}{T}\sum_{n=3}^{T}X_{n-1}X_{n-2} & \cdots & \frac{1}{T}\sum_{n=k+1}^{T}X_{n-k}X_{n-2} & \cdots & \frac{1}{T}X_1X_{T-2}\\
	\vdots & \vdots & \ddots & \vdots & \vdots & \vdots \\
	\frac{1}{T}\sum_{n=k+1}^{T}X_{n-k} & \frac{1}{T}\sum_{n=k+1}^{T}X_{n-1}X_{n-k} & \cdots & \frac{1}{T}\sum_{n=k+1}^{T}X_{n-k}^2  & \cdots & \frac{1}{T}X_1X_{T-k}\\
  \frac{1}{T}\sum_{n=k+2}^{T}X_{n-k-1} & \frac{1}{T}\sum_{n=k+2}^{T}X_{n-1}X_{n-k-1} & \cdots & \frac{1}{T}\sum_{n=k+2}^{T}X_{n-k}X_{n-k-1}  & \cdots & \frac{1}{T}X_1X_{T-k-1}\\
	\vdots & \vdots & \vdots & \vdots & \ddots & \vdots \\
	\frac{1}{T}X_1& \frac{1}{T}X_{T-1}X_1 & \cdots & \frac{1}{T}X_{T-k}X_{1} & \cdots & \frac{1}{T}X_1^2
	\end{array}
	\right),$}
	\end{equation*}
  precisely,
  \begin{equation*}
     \boldsymbol{Y}=(Y_{ij})
     =
     \begin{cases}
     1, &i=j=1,\\
     \frac{1}{T}\sum_{n=\max\{i,j\}}^TX_{n-\max\{i,j\}+1}, &i\neq j,\ i\ \text{or}\ j=1,\\
     \frac1T\sum_{n=\max\{i,j\}}^TX_{n-i+1}X_{n-j+1}, &\text{otherwise}.
     \end{cases}
  \end{equation*}

    The Conditional Least-Squares (CLS) method is a well-established approach for estimating the parameters of INAR processes, as it often circumvents the numerical complexities associated with Maximum Likelihood Estimation. The CLS estimation for the univariate INAR($p$) model has been discussed by \citet{DuLi1991} and \citet{Zhang2010}, frequently building upon the general theoretical framework for CLS estimators developed by \citet{KlimkoNelson1978}. A key insight, noted by \citet{Latour1997} for the multivariate case, is that an INAR($p$) process can be represented as a standard vector autoregressive (VAR) process with white-noise innovations. This representation, detailed in texts such as \citet{Lutkepohl2005}, allows for a straightforward derivation of the CLS estimator. Following this principle, we can solve for the parameter vector $\theta$.

  By using the conclusion of the general least squares method, the $ \hat{\boldsymbol{\theta}}_T $ that minimizes $ \gamma_T(f) $ satisfies
	$ 
	\boldsymbol{Y}\hat{\boldsymbol{\theta}}_T = \boldsymbol{b}.
	$
  If $ \boldsymbol{Y} $ has an inverse, we obtain the best estimator
	\[ 
	\hat{\boldsymbol{\theta}}_T  = \boldsymbol{Y}^{-1}\boldsymbol{b}.
	\]

  It is crucial to recognize that the notion of ``best'' is inherently tied to the norm $\norm \cdot_D$ as defined initially. Proposition \ref{prop:quadraticform} establish that $\norm \cdot_D$ is indeed a norm. Furthermore, Theorem \ref{thm:maintheorem} establish that $\mathbb{E}[\gamma_T(f)]$ can be expressed as $\norm{ f - s}_D^2 - \norm s_D^2$. Within this framework, $\gamma_T(f)$ serves as an empirical representation of $\norm{ f - s}_D^2 - \norm s_D^2$, aligning with the conventional approach in Least-Squares Contrasts. Consequently, the estimator $\hat{\boldsymbol{\theta}}_T$ is optimized to minimize $\gamma_T(f)$ under the norm  $\norm \cdot_D$, thereby qualifying as the ``best'' estimator.

Building upon the properties of the contrast function, we now establish the key large-sample properties of the resulting least-squares estimator, $\hat{\boldsymbol{\theta}}_T$. These properties, namely consistency and asymptotic normality, are fundamental for statistical inference and validate the simulation results presented in Sections 3 and 4.

\begin{theorem}[Consistency of the LSE]
  \label{thm:consistency}
The least-squares estimator $\hat{\boldsymbol{\theta}}_T$ is consistent for the true parameter vector $s$. That is, under suitable regularity conditions, as the sample size $T \to \infty$
\[
\hat{\boldsymbol{\theta}}_T \xrightarrow{p} s
\]
\end{theorem}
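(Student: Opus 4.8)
The plan is to combine the normal equations with the martingale structure of the estimating function, using the norm equivalence of Proposition~\ref{prop:quadraticform} to control the inverse of $\boldsymbol{Y}$. First I would record the population analogue of the normal equations. Since Theorem~\ref{thm:maintheorem} shows that $\expectation[\gamma_T(f)]=-2\boldsymbol{\theta}_T^\top\expectation[\boldsymbol{b}]+\boldsymbol{\theta}_T^\top\expectation[\boldsymbol{Y}]\boldsymbol{\theta}_T$ is minimized at $f=s$, the first-order condition gives $\expectation[\boldsymbol{Y}]\,s=\expectation[\boldsymbol{b}]$. Subtracting this from the empirical normal equation $\boldsymbol{Y}\hat{\boldsymbol{\theta}}_T=\boldsymbol{b}$ yields the basic identity
\[
  \hat{\boldsymbol{\theta}}_T-s=\boldsymbol{Y}^{-1}\bigl(\boldsymbol{b}-\boldsymbol{Y}s\bigr),
\]
so the problem reduces to (i) showing the centered score $\boldsymbol{b}-\boldsymbol{Y}s$ is small and (ii) controlling $\boldsymbol{Y}^{-1}$.

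For (i) I would observe that each coordinate of $\boldsymbol{b}-\boldsymbol{Y}s$ is a normalized sum of martingale differences: the first coordinate equals $\tfrac1T\sum_{n=1}^T(X_n-\lambda_n)$ and the $(k{+}1)$-th equals $\tfrac1T\sum_{n=k+1}^T(X_n-\lambda_n)X_{n-k}$. Since $\expectation[X_n\mid\mathcal F_{n-1}]=\lambda_n$ and $X_{n-k}$ is $\mathcal F_{n-1}$-measurable, every summand is an $\mathcal F_{n-1}$-martingale difference; this recovers $\expectation[\boldsymbol{b}-\boldsymbol{Y}s]=0$ and shows the conditional variances are governed by the Poisson identity $\mathrm{Var}(X_n\mid\mathcal F_{n-1})=\lambda_n$. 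Bounding $\expectation[\lambda_n]$ by \eqref{upperboundlambda} and the mixed moments $\expectation[\lambda_n X_{n-k}^2]$ via the second-moment estimate for $\expectation[X_n^2]$ and Lemma~\ref{boundlemma} (extending the renewal argument to the required third moments), a martingale law of large numbers—or a direct Chebyshev bound on the conditional quadratic variation—gives $\boldsymbol{b}-\boldsymbol{Y}s\xrightarrow{p}0$ in the relevant norm.

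For (ii) I would invoke Proposition~\ref{prop:quadraticform}: since $f^\top\expectation[\boldsymbol{Y}]f=\norm f_D^2\ge L^2\norm f_2^2$, the population Gram matrix $\expectation[\boldsymbol{Y}]$ has smallest eigenvalue at least $L^2>0$ and is invertible. A companion law of large numbers for the entries of $\boldsymbol{Y}$—justified by stationarity of the INAR($\infty$) process under $\norm\alpha_1<1$ together with the moment bounds above—gives $\boldsymbol{Y}\xrightarrow{p}\expectation[\boldsymbol{Y}]$, so by continuity of matrix inversion on the invertible cone, $\boldsymbol{Y}$ is invertible with high probability and $\boldsymbol{Y}^{-1}$ is bounded in probability. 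Combining (i) and (ii) through $\norm{\hat{\boldsymbol{\theta}}_T-s}\le\norm{\boldsymbol{Y}^{-1}}_{\mathrm{op}}\,\norm{\boldsymbol{b}-\boldsymbol{Y}s}$—or, more robustly, by writing the prediction error as the quadratic form $(\boldsymbol{b}-\boldsymbol{Y}s)^\top\boldsymbol{Y}^{-1}(\boldsymbol{b}-\boldsymbol{Y}s)$ in the $\norm\cdot_D$ metric—and applying Slutsky's theorem delivers $\hat{\boldsymbol{\theta}}_T\xrightarrow{p}s$.

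The main obstacle is the growing dimension: $\boldsymbol{\theta}_T$ has $T$ coordinates while only $T$ observations are available, and the constants in Proposition~\ref{prop:quadraticform} are not uniform in $T$—in particular $L^2\to0$, so the lower eigenvalue bound on $\expectation[\boldsymbol{Y}]$, hence the control on $\norm{\boldsymbol{Y}^{-1}}_{\mathrm{op}}$, degrades as $T$ grows. The delicate point is therefore to show that the martingale fluctuation $\boldsymbol{b}-\boldsymbol{Y}s$ shrinks fast enough to overcome this degradation. I expect the clean route is to measure consistency in the prediction (energy) norm $\norm\cdot_D$ rather than in $\norm\cdot_2$, so that a trace/information-type identity keeps the aggregated error controlled; and, if coordinate-wise consistency in $\norm\cdot_2$ is required, to work under a truncation order $p_T\to\infty$ with $p_T/T\to0$ so that the effective dimension is $o(T)$. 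This is precisely where the ``suitable regularity conditions'' in the statement must be made explicit.
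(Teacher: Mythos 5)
Your argument is correct in outline but follows a genuinely different route from the paper's. The paper proves consistency abstractly, as a textbook extremum-estimator argument: it posits identification (via Theorem~\ref{thm:maintheorem}), compactness of the parameter space, continuity of $\gamma_T$, and a uniform law of large numbers $\sup_{f\in\Theta}\abs{\gamma_T(f)-\expectation[\gamma_T(f)]}\xrightarrow{p}0$, and then runs the standard Newey--McFadden argument that the minimizer must eventually fall inside any neighborhood of $s$. You instead exploit the explicit linear structure: the normal equations give $\hat{\boldsymbol{\theta}}_T-s=\boldsymbol{Y}^{-1}(\boldsymbol{b}-\boldsymbol{Y}s)$, the centered score is a martingale-difference average (precisely the object the paper only introduces later, in the proof of Theorem~\ref{thm:asymptotic_normality}), and invertibility of the Gram matrix is tied to the lower bound $L^2$ from Proposition~\ref{prop:quadraticform}. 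Your route buys explicitness: it pinpoints exactly which moment bounds and which eigenvalue control are needed, and it would naturally yield convergence rates rather than bare consistency; the paper's route buys brevity by delegating the difficulty to an assumed ULLN. Both proofs leave the same quantitative issue open in the growing-dimension regime. You correctly flag that $L^2\to0$ as $T\to\infty$, so one must show that the martingale fluctuations of $\boldsymbol{b}-\boldsymbol{Y}s$ shrink fast enough to dominate the growth of $\norm{\boldsymbol{Y}^{-1}}_{\mathrm{op}}$; the paper's proof has the mirror-image gap, since a ULLN uniform over a compact subset of a parameter space whose dimension grows with $T$ is asserted, not verified. Your proposed remedies (measuring the error in the prediction norm $\norm{\cdot}_D$, or truncating to $p_T=o(T)$ lags) are exactly the ``suitable regularity conditions'' the theorem statement leaves implicit, so your sketch is, if anything, more candid about where the real work lies.
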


\begin{proof}
  We provide the proof in \ref{sec:appendix_proof_consistency}. 
\end{proof}

Consistency ensures that with a sufficiently large amount of data, our estimator will be arbitrarily close to the true parameter values, providing a fundamental justification for the estimation method. 

\begin{theorem}[Approximate Normality of the LSE for Large $T$]
\label{thm:asymptotic_normality}
For a sufficiently large sample size $T$, under suitable regularity conditions, the distribution of the least-squares estimator $\hat{\boldsymbol{\theta}}_T$ can be approximated by a normal distribution:
\[
\sqrt{T}(\hat{\boldsymbol{\theta}}_T - s) \overset{\cdot}{\sim} N(\mathbf{0}, \boldsymbol{\Sigma}_T)
\]
where $\boldsymbol{\Sigma}_T = \mathbf{J}_T^{-1} \mathbf{K}_T \mathbf{J}_T^{-1}$ is the $T \times T$ sandwich covariance matrix, and the matrices $\mathbf{K}_T$ and $\mathbf{J}_T$ are positive definite.
\end{theorem}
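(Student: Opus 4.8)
The plan is to exploit the exactly quadratic structure of the contrast so that the first-order condition reduces the problem to a martingale central limit theorem, and then to accommodate the growing parameter dimension through the Cram\'er--Wold device. First I would record that $\Phi_f(n) = \mathbf{Z}_n^\top \boldsymbol{\theta}_T$ is \emph{linear} in the parameter, where $\mathbf{Z}_n := (1, X_{n-1}, \dots, X_{n-T+1})^\top$ is the regressor vector (with the convention $X_m = 0$ for $m \le 0$). Consequently $\boldsymbol{Y} = \frac1T\sum_{n=1}^T \mathbf{Z}_n\mathbf{Z}_n^\top$, $\boldsymbol{b} = \frac1T\sum_{n=1}^T X_n \mathbf{Z}_n$, and the Hessian $2T\boldsymbol{Y}$ of $\gamma_T$ is constant, so the estimating equation $\boldsymbol{Y}\hat{\boldsymbol{\theta}}_T = \boldsymbol{b}$ gives the \emph{exact} identity (no Taylor remainder)
\[
\sqrt{T}(\hat{\boldsymbol{\theta}}_T - s) = \boldsymbol{Y}^{-1}\cdot \frac{1}{\sqrt{T}}\sum_{n=1}^T \eta_n \mathbf{Z}_n, \qquad \eta_n := X_n - \lambda_n .
\]

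The next step is to identify the martingale structure. Since $\mathbb{E}[X_n \mid \mathcal{F}_{n-1}] = \lambda_n = \Phi_s(n)$ and $\mathbf{Z}_n$ is $\mathcal{F}_{n-1}$-measurable, the terms $\eta_n\mathbf{Z}_n$ form a vector martingale-difference sequence, so $\mathbf{M}_T := \sum_{n=1}^T \eta_n\mathbf{Z}_n$ is a martingale. The Poisson conditional law yields $\mathbb{E}[\eta_n^2 \mid \mathcal{F}_{n-1}] = \mathrm{Var}(X_n\mid\mathcal F_{n-1}) = \lambda_n$, and this state-dependent conditional variance is exactly what forces the sandwich form: the ``meat'' is $\mathbf{K}_T := \frac1T\sum_{n=1}^T \mathbb{E}[\lambda_n\mathbf{Z}_n\mathbf{Z}_n^\top]$ and the ``bread'' is $\mathbf{J}_T := \mathbb{E}[\boldsymbol{Y}] = \frac1T\sum_{n=1}^T\mathbb{E}[\mathbf{Z}_n\mathbf{Z}_n^\top]$. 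Note that $\mathbf{J}_T$ is precisely the Gram matrix of the $\norm\cdot_D$ norm, because $\norm f_D^2 = \boldsymbol{\theta}_T^\top \mathbf{J}_T\boldsymbol{\theta}_T$; Proposition~\ref{prop:quadraticform} then gives the two-sided eigenvalue bound $L^2 I \preceq \mathbf{J}_T \preceq K^2 I$, so $\mathbf{J}_T$ is positive definite and invertible, and since $\lambda_n \ge \nu > 0$ we obtain $\mathbf{K}_T \succeq \nu\,\mathbf{J}_T \succ 0$, establishing the positive definiteness of both matrices claimed in the statement.

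With these pieces in place I would prove two limit statements. (i) A law of large numbers, $\boldsymbol{Y} - \mathbf{J}_T \xrightarrow{p} \mathbf{0}$ and $\frac1T\sum_n \lambda_n\mathbf{Z}_n\mathbf{Z}_n^\top - \mathbf{K}_T\xrightarrow{p}\mathbf{0}$, using stationarity of the INAR($\infty$) process together with the uniform second- and fourth-moment control flowing from the bound on $\mathbb{E}[X_n^2]$ and from Lemma~\ref{boundlemma}. (ii) A martingale central limit theorem for $T^{-1/2}\mathbf{M}_T$. Because the parameter dimension grows with $T$, I would not invoke a fixed-dimension vector CLT; instead, for an arbitrary deterministic unit vector $\mathbf{c}_T$ (allowed to depend on $T$) I would apply a scalar triangular-array martingale CLT (Hall--Heyde) to the normalized linear functional $T^{-1/2}\mathbf{c}_T^\top \mathbf{K}_T^{-1/2}\mathbf{M}_T$, verifying the conditional-variance convergence (which reduces to (i)) and the conditional Lindeberg condition, and then deduce joint asymptotic normality via the Cram\'er--Wold device. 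Combining (i)--(ii) with Slutsky's theorem converts the exact identity above into $\sqrt T(\hat{\boldsymbol{\theta}}_T - s)\overset{\cdot}{\sim} N(\mathbf{0},\mathbf{J}_T^{-1}\mathbf{K}_T\mathbf{J}_T^{-1})$, which is the asserted sandwich.

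The main obstacle is the Lindeberg (Lyapunov) condition in the growing-dimension regime: the regressors $\mathbf{Z}_n$ are themselves $(T-1)$-dimensional and their norms grow with $T$, so I must control a fourth-moment quantity such as $T^{-2}\sum_{n=1}^T \mathbb{E}\bigl[(\mathbf{c}_T^\top \mathbf{K}_T^{-1/2}\mathbf{Z}_n)^4\,\eta_n^4\bigr]$ uniformly in $T$ and in the direction $\mathbf{c}_T$. This requires uniform-in-$n$ fourth-moment bounds on $X_n$ (an extension of the second-moment estimate already derived, presumably under a strengthening of the $\norm\alpha_2^2<\frac12$ assumption together with $\alpha\in\ell^1$) and a careful argument that the whitening $\mathbf{K}_T^{-1/2}$ keeps the effective variance of each martingale increment of order $1/T$, exploiting $L^2 I \preceq \mathbf{J}_T$ to bound $\mathbf{K}_T^{-1/2}$. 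Handling this term, rather than the algebra of the sandwich decomposition, is where the ``suitable regularity conditions'' of the statement must do their work.
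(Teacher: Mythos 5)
Your proposal follows the same skeleton as the paper's proof---first-order condition, recognition that the score $\nabla\gamma_T(s)$ is a sum of martingale differences $\eta_n\mathbf{Z}_n$ with $\eta_n=X_n-\lambda_n$, a martingale CLT for the score, and inversion of the (expected) Hessian to produce the sandwich $\mathbf{J}_T^{-1}\mathbf{K}_T\mathbf{J}_T^{-1}$---so in that sense it is the same route. However, you sharpen it in several places where the paper is only heuristic. (i) You use the exact quadratic structure to write $\sqrt{T}(\hat{\boldsymbol{\theta}}_T-s)=\boldsymbol{Y}^{-1}\cdot T^{-1/2}\sum_n\eta_n\mathbf{Z}_n$ with no Taylor remainder, whereas the paper invokes a mean-value expansion at an intermediate point $\bar{\boldsymbol{\theta}}$ that is unnecessary here since the Hessian $2\boldsymbol{Y}$ does not depend on $\boldsymbol{\theta}$. (ii) You compute the ``meat'' explicitly from the conditional Poisson variance, $\mathbf{K}_T=\frac1T\sum_n\mathbb{E}[\lambda_n\mathbf{Z}_n\mathbf{Z}_n^\top]$, while the paper leaves $\mathbf{K}_T$ as an unevaluated variance (up to the harmless factor-of-two normalization difference, the two sandwiches agree). (iii) Your observation that $\norm{f}_D^2=\boldsymbol{\theta}_T^\top\mathbb{E}[\boldsymbol{Y}]\boldsymbol{\theta}_T$, so that Proposition~\ref{prop:quadraticform} delivers $L^2 I\preceq\mathbf{J}_T\preceq K^2 I$ and hence $\mathbf{K}_T\succeq\nu\,\mathbf{J}_T\succ0$, actually \emph{proves} the positive definiteness asserted in the theorem statement, which the paper nowhere justifies. (iv) Most importantly, you confront the growing-dimension issue head-on via a Cram\'er--Wold reduction with $T$-dependent directions and correctly identify the conditional Lindeberg condition---requiring uniform fourth-moment control of $X_n$ beyond the second-moment bounds the paper establishes---as the genuine outstanding gap; the paper's proof simply cites a martingale CLT as if the dimension were fixed and never verifies a Lindeberg-type condition. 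So your plan is, if anything, closer to a complete argument than the published one, and the obstacle you flag is real: closing it is exactly what the ``suitable regularity conditions'' in the statement would have to encode.
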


\begin{proof}
  We provide the proof in \ref{sec:appendix_proof_asymptotic_normality}.
\end{proof}

The approximate covariance matrix has the celebrated ``sandwich'' form, $\boldsymbol{\Sigma}_T = \mathbf{J}_T^{-1} \mathbf{K}_T \mathbf{J}_T^{-1}$. Intuitively, for a large sample size $T$, the two outer matrices of the sandwich, $\mathbf{J}_T = 2\cdot\mathbb{E}[\mathbf{Y}]$, represent an approximation of the objective function's curvature, related to the expected Hessian. The inner matrix, or ``meat'' of the sandwich, $\mathbf{K}_T = \text{Var}(\sqrt{T}\nabla\gamma_{T}(s))$, represents the variance of the scaled score vector (the gradient evaluated at the true parameter) and captures the randomness from the process innovations for that given sample size $T$. This theoretical result provides the foundation for constructing confidence intervals and hypothesis tests.

We will further substantiate the practical efficacy of this estimation technique through numerical experiments.

  \section{Numerical Experiments: Consistency of the estimator}\label{sec:numericalResults}

  In this section, we illustrate the performance of the proposed least-squares 
estimator for the cumulative INAR($\infty$) [c-INAR($\infty$)] model via 
numerical experiments. Our simulation procedure is structured in two main parts. 
First, we describe the process of generating a single realization of the 
model, which is detailed in Algorithm~\ref{alg:inar_simulation}. 
Building upon this, we then present our comprehensive Monte Carlo framework in 
Algorithm~\ref{alg:monte_carlo}, which explains how multiple independent 
realizations are used to evaluate the statistical properties of the least-squares 
estimator (LSE). Finally, we present and discuss the results of these experiments, 
focusing on the estimator's accuracy.
  
  \subsection{Simulation of a Single Realization}
  We begin by simulating one path of length $T$ from a c-INAR($T-1$) process. Let $\nu>0$ be the immigration rate, and let $\alpha(\cdot)$ be the offspring function such that
  \[
  X_n \mid \mathcal{F}_{n-1} \;\sim\; \mathrm{Poisson}\!\Bigl(\,\nu + \sum_{k=1}^{n-1}\alpha(n-k)\,X_k\Bigr).
  \]
  Algorithm~\ref{alg:inar_simulation} outlines this procedure in detail.
  
  \begin{algorithm}[h]
  \caption{Simulating a single c-INAR($\infty$) realization}
  \label{alg:inar_simulation}
  \begin{algorithmic}[1]
      \Require Sample size $T$; true immigration rate $\nu$; offspring function $\alpha(\cdot)$.
      \State Initialize an array $X$ of length $T$ to store the realization.
      \State Draw $X_1 \sim \mathrm{Poisson}(\nu)$.

      \For{$n = 2 \to T$}
          \State Compute $\lambda_n \gets \nu + \sum_{k=1}^{n-1} \alpha(n-k) X_k$. 
          \State Sample $X_n \sim \mathrm{Poisson}(\lambda_n)$.
      \EndFor
      \Ensure The sequence $(X_n)_{1\le n\le T}$.
  \end{algorithmic}
  \end{algorithm}
  
  \subsection{Multiple Replications and Least-Squares Estimation}
  To evaluate the performance of the proposed CLS estimator in a finite-sample context, we conduct a comprehensive Monte Carlo simulation. The core idea is to generate a large number of independent sample paths from the process with known parameters. For each individual path, we compute a corresponding least-squares estimate. By analyzing the statistical properties of this collection of estimates, such as their mean and mean squared error, we can assess the estimator's accuracy and bias. This procedure allows us to verify whether the estimator behaves as predicted by the large-sample theory, even for a finite sample size $T$. The entire process for obtaining and evaluating the LSE is summarized in Algorithm \ref{alg:monte_carlo}.
  
  \begin{algorithm}[htbp]
\caption{Monte Carlo Simulation for Evaluating the LSE Performance}
\label{alg:monte_carlo}
\begin{algorithmic}[1]
\State \textbf{Require:} Number of replications $N_{\text{experiments}}$; sample size $T$; true parameters $s = (\nu, \alpha(\cdot))$.

\Statex \textit{// Part A: Generating a Collection of Estimates}
\State Initialize an empty list to store the results: $\texttt{estimator\_list} \leftarrow []$.
\For{$i = 1$ \textbf{to} $N_{\text{experiments}}$}
    \State Generate a single, independent sample path $X^{(i)} = (X_1^{(i)}, \dots, X_T^{(i)})$ ~\ref{alg:inar_simulation}.
    \State Construct the matrix $\mathbf{Y}^{(i)}$ and vector $\mathbf{b}^{(i)}$ based solely on the path $X^{(i)}$.
    \State Solve the linear system $\mathbf{Y}^{(i)} \hat{\theta}_T^{(i)} = \mathbf{b}^{(i)}$ to obtain the estimate $\hat{\theta}_T^{(i)}$.
    \State Append the resulting estimator $\hat{\theta}_T^{(i)}$ to $\texttt{estimator\_list}$.
\EndFor

\Statex \textit{// Part B: Analyzing the Estimator's Properties}
\State Let $\left(\hat{\theta}_T^{(i)}\right)_{i=1}^{N_{\text{experiments}}}$ be the collection of estimates in $\texttt{estimator\_list}$.
\State Compute the mean of the estimators to assess bias: $\bar{\theta}_T \leftarrow \frac{1}{N_{\text{experiments}}} \sum_{i=1}^{N_{\text{experiments}}} \hat{\theta}_T^{(i)}$.
\State Compute the Mean Squared Error (MSE) against the true parameter $s$: 
\Statex \qquad $\text{MSE} \leftarrow \frac{1}{N_{\text{experiments}}} \sum_{i=1}^{N_{\text{experiments}}} \|\hat{\theta}_T^{(i)} - s\|^2$.
\State Analyze the empirical distribution of the components of $\hat{\theta}_T^{(i)}$ (for histograms, Q-Q plots, and normality tests as in Section 4).

\Statex
\State \textbf{Output:} Statistical properties of the LSE (e.g., mean, MSE, empirical distribution).
\end{algorithmic}
\end{algorithm}
  
\subsection{Numerical Experiments}

To empirically validate the theoretical properties of our proposed Conditional Least-Squares (CLS) estimator, we conduct a series of Monte Carlo simulations. The experiments are designed to investigate two key aspects: (1) the estimator's performance and convergence as the sample size $T$ increases, and (2) its robustness under different parameter settings. All simulations follow the framework described in Algorithm~\ref{alg:monte_carlo} with $N_{\text{experiments}} = 1000$ replications for each setting. The number of estimated autoregressive parameters is fixed at $p=10$.

\subsubsection{Performance under Theoretical Assumptions}

We first analyze a scenario that fully satisfies the theoretical conditions of our framework, particularly $\|\alpha\|_1 < 1$ and $\|\alpha\|_2^2 < 1/2$. The true parameters are set to:
\begin{itemize}
    \item \textbf{Case 1}: $\nu=100$ and $\alpha_n=(1/4)^n$ for $n \geq 1$.
\end{itemize}

We evaluate the estimator's performance across three sample sizes: $T=200$, $T=500$, and $T=1000$. Table~\ref{tab:case1_results} summarizes the key performance metrics, averaged over all replications.

\begin{table}[htbp]
  \centering
  \caption{Estimator Performance for Case 1 ($\alpha_n = (1/4)^n$) across Different Sample Sizes ($T$)}
  \label{tab:case1_results}
  \resizebox{\textwidth}{!}{%
  \begin{tabular}{l c c c}
    \toprule
    \textbf{Metric / Parameter} & \textbf{T = 200} & \textbf{T = 500} & \textbf{T = 1000} \\
    \midrule
    \multicolumn{4}{l}{\textit{Mean of Parameter Estimates (Bias Assessment)}} \\
    \quad Mean($\hat{\nu}$) & 100.58 & 100.47 & 100.26 \\
    \quad Mean($\hat{\alpha}_1$) & 0.2486 & 0.2472 & 0.2489 \\
    \quad Mean($\hat{\alpha}_2$) & 0.0562 & 0.0600 & 0.0601 \\
    \midrule
    \multicolumn{4}{l}{\textit{Error Metrics (Variance and Accuracy Assessment)}} \\
    \quad Mean Squared Error (MSE) & 52.81 & 39.94 & 29.94 \\
    \quad Relative $\ell^2$-Error ($\boldsymbol{\theta}$) & 0.576\% & 0.466\% & 0.263\% \\
    \quad Relative $\ell^2$-Error ($\boldsymbol{\alpha}$) & 3.320\% & 1.790\% & 1.459\% \\
    \bottomrule
  \end{tabular}}
  \par\medskip
  \small \textit{Note: The true values are $\nu=100, \alpha_1=0.25, \alpha_2=0.0625$. The relative $\ell^2$-error for $\boldsymbol{\theta}$ is calculated as $\|\text{mean}(\hat{\boldsymbol{\theta}}) - \boldsymbol{\theta}_{\text{true}}\|_2 / \|\boldsymbol{\theta}_{\text{true}}\|_2$.}
\end{table}

The results in Table~\ref{tab:case1_results} provide strong empirical support for our theory. The mean of the parameter estimates remains very close to the true values across all sample sizes, suggesting that the CLS estimator is approximately unbiased. More importantly, we observe a clear trend of decreasing error metrics as $T$ grows. The Mean Squared Error (MSE), which captures both bias and variance, systematically declines from 52.81 at $T=200$ to 29.94 at $T=1000$. The relative $\ell^2$-errors for both the full parameter vector $\boldsymbol{\theta}$ and its autoregressive part $\boldsymbol{\alpha}$ show a consistent reduction. This empirically validates the consistency of the estimator, as established in Theorem 2.

\subsubsection{Robustness to Assumption Violations}
Next, we assess the estimator's performance when the theoretical condition $\|\alpha\|_2^2 < 1/2$ for our variance bounds is not met. We consider a case with a more concentrated autoregressive effect:
\begin{itemize}
    \item \textbf{Case 2}: $\nu=100$, $\alpha_1=0.8$, and $\alpha_n=0$ for $n \geq 2$. Here, $\|\alpha\|_2^2 = 0.64 > 0.5$.
\end{itemize}

The simulation results for Case 2, also across $T=200, 500, 1000$, are presented in Table~\ref{tab:case2_results}.

\begin{table}[htbp]
  \centering
  \caption{Estimator Performance for Case 2 ($\alpha_1=0.8$) across Different Sample Sizes ($T$)}
  \label{tab:case2_results}
  \resizebox{\textwidth}{!}{%
  \begin{tabular}{l c c c}
    \toprule
    \textbf{Metric / Parameter} & \textbf{T = 200} & \textbf{T = 500} & \textbf{T = 1000} \\
    \midrule
    \multicolumn{4}{l}{\textit{Mean of Parameter Estimates (Bias Assessment)}} \\
    \quad Mean($\hat{\nu}$) & 101.49 & 101.03 & 100.83 \\
    \quad Mean($\hat{\alpha}_1$) & 0.7967 & 0.7971 & 0.7990 \\
    \quad Mean($\hat{\alpha}_2$) & -0.0059 & -0.0017 & -0.0021 \\
    \midrule
    \multicolumn{4}{l}{\textit{Error Metrics (Variance and Accuracy Assessment)}} \\
    \quad Mean Squared Error (MSE) & 86.39 & 65.48 & 50.11 \\
    \quad Relative $\ell^2$-Error ($\boldsymbol{\theta}$) & 1.486\% & 1.031\% & 0.832\% \\
    \quad Relative $\ell^2$-Error ($\boldsymbol{\alpha}$) & 1.291\% & 0.789\% & 0.674\% \\
    \bottomrule
  \end{tabular}}
  \par\medskip
  \small \textit{Note: The true values are $\nu=100, \alpha_1=0.8, \alpha_2=0.0$. Negative estimates for $\alpha_n$ were capped at 0 before calculating error metrics, as per the discussion in the text.}
\end{table}

Several interesting observations emerge from Table~\ref{tab:case2_results}. First, the estimator demonstrates remarkable robustness. Even though the technical assumption is violated, the mean of the dominant parameter estimate, $\hat{\alpha}_1$, remains exceptionally close to its true value of 0.8. The estimates for other $\alpha_n$ coefficients are correctly centered around zero.

Second, the convergence property is preserved. The MSE and relative errors again decrease monotonically as the sample size $T$ increases, reinforcing the estimator's consistency. Interestingly, the relative error for the $\boldsymbol{\alpha}$ vector is lower in this case compared to Case 1. This is because the signal is concentrated in a single, large coefficient, making it easier to distinguish from noise compared to the distributed, decaying coefficients in Case 1.

\begin{remark}
These experiments collectively demonstrate that the CLS estimator is a practical and reliable tool for INAR($\infty$) processes. The results not only confirm its consistency by showing clear convergence as sample size increases under different settings but also highlight its robustness to violations of certain technical assumptions. The analysis underscores the finite-sample trade-off between bias and variance, providing valuable insights for practical applications.
\end{remark}

  \section{Numerical Experiment: Asymptotic Normality in Finite Samples}
\label{sec:asymptotic_normality}

Beyond convergence, a crucial property for statistical inference is the distribution of the estimator. In the traditional sense, \emph{asymptotic normality} refers to an estimator's distribution converging to a normal distribution as the sample size approaches infinity. To bridge the gap between theory and practice for our c-INAR($\infty$) model—where the estimator's dimension can grow with the sample size—we numerically investigate the finite-sample distribution of the leading components of $\hat{\boldsymbol{\theta}}_T$.

The goal is to assess how well the empirical distributions conform to their theoretical normal counterparts at practical sample sizes, specifically $T=200$ and $T=500$. The methodology follows the Monte Carlo framework (Algorithm~\ref{alg:monte_carlo}) for \textbf{Case 1} ($\nu=100, \alpha_n=(1/4)^n$), as this scenario satisfies our theoretical assumptions. For each sample size, we generate 1000 independent estimates ($\hat{\boldsymbol{\theta}}_T^{(i)}$) and analyze their distributions using visual tools (histograms, Q-Q plots) and formal statistical tests (Jarque-Bera, Shapiro-Wilk).

\subsection{Illustrative Figures and Observations}

The results of our normality investigation are presented in Figure~\ref{fig:combined_figures} and Table~\ref{tab:test_results}. A visual inspection of the figures immediately reveals the impact of the sample size: the distributions at $T=500$ appear more symmetric and concentrated than at $T=200$. The formal test results quantify these visual observations.

\begin{figure}[h!]
  \centering
  \begin{subfigure}[b]{0.48\textwidth}
    \centering
    \includegraphics[width=\textwidth]{./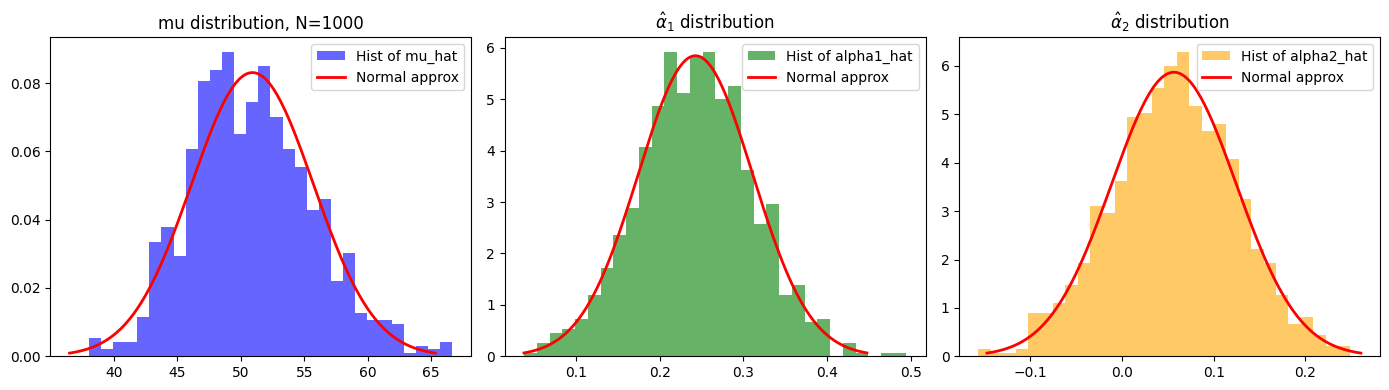}
    \caption{Histograms of LSE components at $T=200$.}
    \label{fig:hist_t200}
  \end{subfigure}
  \hfill
  \begin{subfigure}[b]{0.48\textwidth}
    \centering
    \includegraphics[width=\textwidth]{./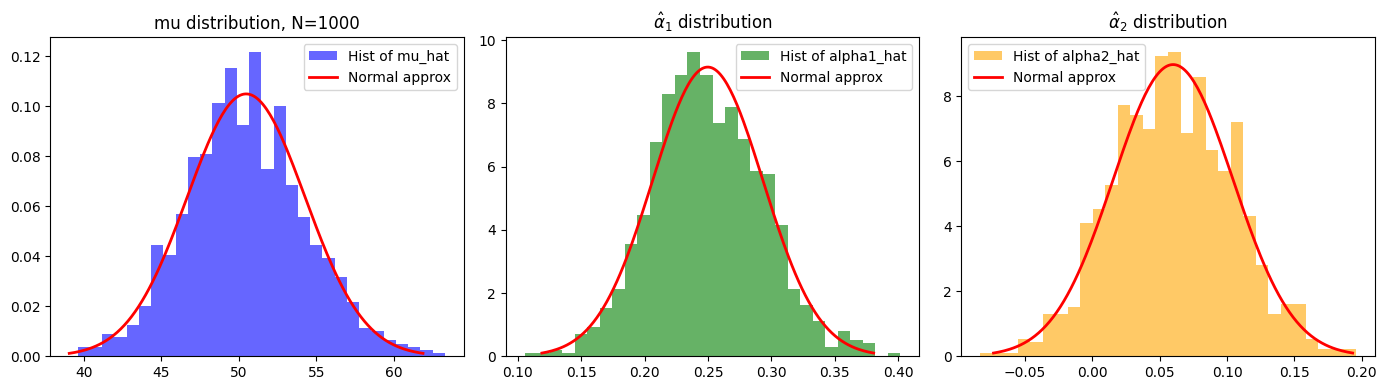}
    \caption{Histograms of LSE components at $T=500$.}
    \label{fig:hist_t500}
  \end{subfigure}
  
  \vspace{0.5cm}
  
  \begin{subfigure}[b]{0.48\textwidth}
    \centering
    \includegraphics[width=\textwidth]{./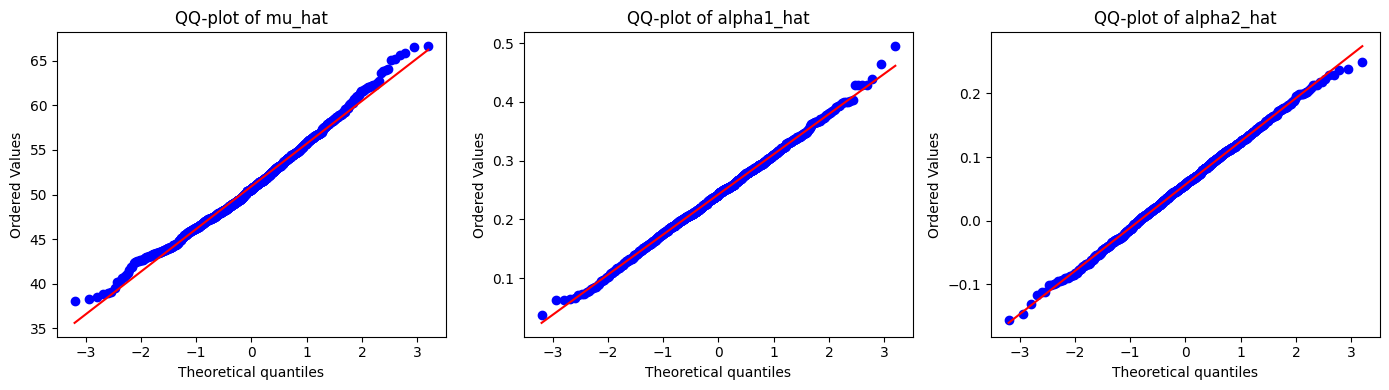}
    \caption{Q-Q plots for LSE components at $T=200$.}
    \label{fig:qq_t200}
  \end{subfigure}
  \hfill
  \begin{subfigure}[b]{0.48\textwidth}
    \centering
    \includegraphics[width=\textwidth]{./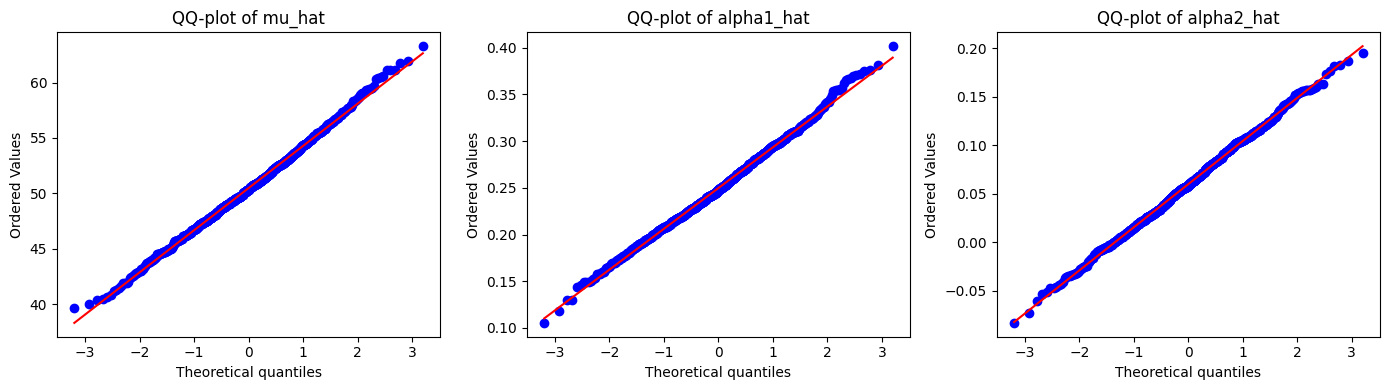}
    \caption{Q-Q plots for LSE components at $T=500$.}
    \label{fig:qq_t500}
  \end{subfigure}
  
  \caption{Histograms and Q-Q plots of the least-squares estimator (LSE) components for sample sizes $T=200$ and $T=500$. The plots show distributions for $\hat{\nu}$ (blue), $\hat{\alpha}_1$ (green), and $\hat{\alpha}_2$ (orange).}
  \label{fig:combined_figures}
\end{figure}

\begin{table}[h!]
  \centering
  \caption{Jarque-Bera and Shapiro-Wilk Test p-values for LSE Components at $T=200$ and $T=500$}
  \label{tab:test_results}
  \begin{tabular}{l|cc|cc}
    \toprule
    \multirow{2}{*}{\textbf{Parameter}} & \multicolumn{2}{c|}{\textbf{$T=200$}} & \multicolumn{2}{c}{\textbf{$T=500$}} \\
    \cmidrule(lr){2-3} \cmidrule(lr){4-5}
     & \textbf{JB p-value} & \textbf{SW p-value} & \textbf{JB p-value} & \textbf{SW p-value} \\
    \midrule
    $\hat{\nu}$         & 0.0002 & 0.1106 & 0.1174 & 0.4589 \\
    $\hat{\alpha}_1$   & 0.7372 & 0.6985 & 0.0872 & 0.2456 \\
    $\hat{\alpha}_2$   & 0.3988 & 0.6192 & 0.5363 & 0.4089 \\
    \bottomrule
  \end{tabular}
\end{table}

At the smaller sample size of $T=200$, the results are mixed. For the immigration rate estimator, $\hat{\nu}$, we observe a conflict between the tests: the Jarque-Bera test strongly rejects normality ($p=0.0002$), suggesting the presence of skewness or heavy tails, while the Shapiro-Wilk test does not ($p=0.1106$). This discrepancy indicates that the distribution has not fully converged. In contrast, the autoregressive estimators $\hat{\alpha}_1$ and $\hat{\alpha}_2$ appear reasonably normal even at this sample size, with high p-values from both tests.

The situation improves markedly at $T=500$. All estimators for $\hat{\nu}$, $\hat{\alpha}_1$, and $\hat{\alpha}_2$ now pass both normality tests comfortably, with all p-values well exceeding the 0.05 significance level. This, combined with the more symmetric histograms and better-aligned Q-Q plots in Figure~\ref{fig:combined_figures}, provides strong empirical evidence that the sampling distributions are indeed converging towards normality as the sample size increases.

\begin{remark}
The numerical analysis confirms the asymptotic normality property of the CLS estimator in a practical, finite-sample context. While smaller sample sizes like $T=200$ may exhibit some deviation from normality (particularly for the intercept term), the approximation becomes robust as the sample size grows. This finding complements our earlier results on consistency and solidifies the foundation for using this estimator for statistical inference, such as constructing confidence intervals and conducting hypothesis tests, in sufficiently large datasets.
\end{remark}

\section{Concluding Remarks}
\label{sec:conclusion}

In this paper, we have developed a comprehensive framework for the estimation and inference of the cumulative INAR($\infty$) process, a vital model for count time series that is equivalent to discrete-time Hawkes processes. Our primary contribution is the introduction and rigorous analysis of a computationally efficient conditional least-squares (CLS) estimator, particularly within a high-dimensional setting where the number of parameters is allowed to grow with the sample size.

Our theoretical investigation established the fundamental properties of the CLS estimator, including its consistency and asymptotic normality. A key theoretical result is the derivation of the sandwich-form covariance matrix for the estimator, which correctly accounts for the underlying conditional Poisson structure of the process and is crucial for accurate statistical inference.

These theoretical findings were substantiated through extensive Monte Carlo simulations. The numerical experiments provided strong empirical evidence for our theory, demonstrating two key results:
\begin{enumerate}
    \item \textbf{Consistency in Practice:} The estimator's accuracy and stability, as measured by Mean Squared Error and relative errors, systematically improve as the sample size increases from $T=200$ to $T=1000$. This holds true both when the model's theoretical assumptions are met and when they are violated, highlighting the estimator's robustness.
    \item \textbf{Convergence to Normality:} The finite-sample distribution of the estimator progressively converges to a normal distribution with larger sample sizes, confirming the practical applicability of our asymptotic normality results for constructing confidence intervals and performing hypothesis tests.
\end{enumerate}

While our proposed CLS estimator proves to be effective and theoretically sound, our work also illuminates potential avenues for future research. The observed variance of the estimator in smaller samples suggests that incorporating regularization techniques, such as Ridge or LASSO penalties, could enhance its finite-sample stability. Furthermore, extending this accessible least-squares framework to multivariate INAR($\infty$) or other complex point process models remains a promising direction.

In summary, this work provides a complete and practical toolkit for the statistical analysis of c-INAR($\infty$) processes. It not only offers a solid theoretical foundation but also delivers clear empirical validation, paving the way for its reliable application in fields such as finance, epidemiology, and social sciences where self-exciting count data are prevalent.

\section{Contribution}
    \textbf{Ying-Li Wang:} Conceptualization, Methodology, Software, Formal analysis, Investigation, Validation, Visualization, Writing – original draft.
    
    \textbf{Xiao-Hong Duan:} Data curation, Software, Validation.
    
    \textbf{Ping He:} Supervision, Project administration.
  
\appendix
\section{Proofs}
\label{sec:appendix_proofs}
\subsection{Proof of Lemma \ref{discreterenewalequation}}
\label{sec:appendix_proof_discrete_renewal_equation}
  Consider the generating functions (or \( z \)-transforms) of the sequences involved:
  \[
    \mathcal{G}(x)(z) = \sum_{n=1}^\infty x_n z^n, \quad \mathcal{G}(y)(z) = \sum_{n=1}^\infty y_n z^n, \quad \mathcal{G}(\eta)(z) = \sum_{n=1}^\infty \eta_n z^n.
  \]
  Taking the \( z \)-transform on both sides of equation \eqref{seriesconvolution}, and using the convolution property of \( z \)-transforms \citet{oppenheim1999discrete}, we obtain
  \[
    \mathcal{G}(x)(z) = \mathcal{G}(y)(z) + \mathcal{G}(\eta)(z) \cdot \mathcal{G}(x)(z).
  \]
  Solving for \( \mathcal{G}(x)(z) \), we get
  \[
    \mathcal{G}(x)(z) \left(1 - \mathcal{G}(\eta)(z)\right) = \mathcal{G}(y)(z),
  \]
  which leads to
  \[
    \mathcal{G}(x)(z) = \frac{\mathcal{G}(y)(z)}{1 - \mathcal{G}(\eta)(z)}.
  \]
  This step utilizes the property that if \( \left| \mathcal{G}(\eta)(z) \right| < 1 \), the above equation holds \citet{edition2002probability}.

  To find \( x_n \), we perform the inverse \( z \)-transform on both sides. The expression
  \[
    \frac{1}{1 - \mathcal{G}(\eta)(z)}
  \]
  corresponds to the generating function of the convolution inverse sequence \( (A_n)_{n\ge1} \). Therefore, by the convolution theorem \citet{oppenheim1999discrete}, we obtain
  \[
    x_n = y_n + \sum_{i=1}^{n-1} A_i y_{n-i}.
  \]
  This concludes the proof.

\subsection{Proof of Lemma \ref{boundlemma}}
\label{sec:appendix_proof_boundlemma}
  First, by the Cauchy-Schwarz inequality,
  \[
    \left( \sum_{k=1}^{n-1}\beta_k^{\frac12}\beta_k^{\frac12}X_{n-k} \right)^2
    \le \left(\sum_{k=1}^{n-1}\beta_k\right)\left( \sum_{k=1}^{n-1}\beta_kX_{n-k}^2 \right)
    =\sum_{k=1}^{n-1}\beta_k\sum_{\tau=1}^{n-1}\beta_\tau X_{n-\tau}^2,
  \]
  taking the expectation of both sides yields
  \begin{align*}
    \mathbb E\left[ \left( \sum_{k=1}^{n-1}\beta_kX_{n-k} \right)^2 \right]
    \le &\mathbb E \left[ \left( \sum_{k=1}^{n-1}\beta_k\sum_{\tau=1}^{n-1}\beta_\tau X_{n-\tau}^2 \right) \right]\\
    =&\sum_{k=1}^{n-1}\beta_k\sum_{\tau=1}^{n-1}\beta_{n-\tau}\mathbb E[X_\tau^2]\\
    \le&  \frac{2\nu^2(1-\norm \alpha_1)+\nu}{(1-2\norm\alpha_2^2)(1-\norm\alpha_1)}\left( \sum_{k=1}^{n-1} \beta_k \right)^2.
  \end{align*}

\subsection{Proof of Proposition \ref{prop:quadraticform}}
\label{sec:appendix_proof_quadraticform}
  Assume $f=(\mu,\beta)$, we will compute $\norm f_D^2$,
  \begin{equation}\label{f_D2}
    \begin{aligned}
    \norm f_D^2
    = &\mathbb{E}[D_T^2(f)] =\frac{1}{T}\sum_{n=1}^{T}\mathbb{E}\left[\Phi^2_f(n)\right] \\
    = & \frac{1}{T}\sum_{n=1}^{T}\mathbb{E}\left[ \left( \mu + \sum_{k=1}^{n-1}\beta_kX_{n-k} \right) ^2 \right]  \\
    = & \frac{1}{T}\sum_{n=1}^{T}\mathbb{E}\left[\mu ^2 + 2\mu \sum_{k=1}^{n-1}\beta_kX_{n-k} + \left( \sum_{k=1}^{n-1}\beta_kX_{n-k} \right) ^2  \right].
    \end{aligned}
    \end{equation}
  It is easy to verify $\forall f=(\mu,\beta), g=(\lambda,\xi)\in \mathfrak{l}^2$,
  \[
    \frac12(\norm {f+g}_D^2-\norm {f}_D^2-\norm g_D^2)=\frac1T\mathbb E\left[ \sum_{n=1}^T\Phi_f(n)\Phi_g(n) \right],
  \]
  and $\norm f_D^2=0$ if and only if $f=0$. Next, let's prove $\norm \cdot_D$ is equivalent to $\norm \cdot_2$, i.e. \eqref{equivalentnorm}. For the lower bound, we rewrite \eqref{f_D2}, the RHS equals
  \begin{equation}\label{expression1}
    \frac1T\sum_{n=1}^T\left( \mu+\mathbb E\left[ \sum_{k=1}^{n-1}\beta_kX_{n-k} \right]\right)^2+\text{Var}\left[ \sum_{k=1}^{n-1}\beta_kX_{n-k} \right].
  \end{equation}
  For the first part, note that $\mathbb E[X_n]\ge\nu$, for $\theta\in(0,1)$,
  \begin{equation*}
     \begin{aligned} 
    &\frac1T\sum_{n=1}^T\left( \mu+\mathbb E\left[ \sum_{k=1}^{n-1}\beta_kX_{n-k} \right]\right)^2\\
    \ge &\frac1T\sum_{n=1}^T\left( \mu+ \nu\sum_{k=1}^{n-1}\beta_k \right)^2\\
    \ge &\frac1T\sum_{n=1}^T\left((1-\theta)\mu^2+(1-\frac1\theta)\nu^2\left( \sum_{k=1}^{n-1}\beta_k \right)^2\right)\\
    \ge &(1-\theta)\mu^2+\frac1T(1-\frac1\theta)\nu^2\sum_{n=1}^T(n-1)\sum_{k=1}^{n-1}\beta_k^2,
  \end{aligned}
\end{equation*}
  where the second inequality is obviously established since $\mu,\nu,\beta_k\ge0$.

  For the second part, consider first a continuous-time Hawkes process $(\tilde N_t)_{t\ge0}$ with exciting function \eqref{generalizedexpressionofdiscrete}. From \citet{bremaud2001hawkes}, for any $\phi\in L^1\cap L^2$,
  \begin{equation}\label{bartlettformula}
    \text{Var}\left[ \int_\mathbb R\phi(u)d\tilde{N}_u \right] 
    = \int_\mathbb R|\hat\phi(\omega)|^2f_{\tilde{N}}(\omega)d\omega
  \end{equation}
  where $\hat \phi$ is the Fourier transform of $\phi$, $\hat \phi(\omega) = \int_\mathbb R e^{\text{i}\omega t}\phi(t)dt$, $f_{\tilde N}$ is the Bartlett spectrum density of continuous-time Hawkes process $\tilde N$. Since the Fourier transform of $h$ is 
  \[
    \hat h(\omega) 
    = \sum_{k=1}^\infty \alpha_k\int_\mathbb R e^{\text{i}\omega t}\delta_{\{t=k\}}dt
    = \sum_{k=1}^\infty \alpha_ke^{\text{i}\omega k},
  \]
  \begin{align*}
    f_{\tilde N}(\omega)
    =\frac{\nu}{2\pi (1-\norm \alpha_1)|1-\hat h(\omega)|^2}=\frac{\nu}{2\pi (1-\norm \alpha_1)|1-\sum_{k=1}^\infty \alpha_ke^{\text{i}\omega k}|^2}.
  \end{align*}
  Given $n\in\mathbb N$, let 
  \[
    \phi(t)=\phi_n(t):=\beta_{n-\lfloor t \rfloor-1}1_{\{0<t<n\}}=\beta_{\lfloor n-t \rfloor}1_{\{t<n\}}=g(n-t)1_{\{t<n\}},
  \]
  set $\beta_0=0$ for convenience, since $g$ has a positive support, $\hat\phi(\omega)=e^{\text{i}\omega t}\hat g(-\omega)$. Hence, 
  \[
    \text{Var}\left[ \int_\mathbb R\phi(u)d\tilde{N}_u \right] = \int_\mathbb R|\hat g(-\omega)|^2f_{\tilde N}(\omega)d\omega.
  \]
   Since $f_{\tilde N}(\omega)\ge\frac{\nu}{2\pi(1-\norm \alpha_1)(1+\norm \alpha_1)^2}$, and due to the Plancherel's identity, i.e. 
   \[
     \int_\mathbb R|\hat g(-\omega)|^2d\omega=2\pi \sum_{k=1}^{n-1} \beta_k^2,
  \]
  we obtain 
  \[
    \text{Var}\left[ \int_\mathbb R\phi(u)d\tilde{N}_u \right]\ge \frac{\nu}{(1-\norm\alpha_1)(1+\norm \alpha_1)^2}\sum_{k=1}^{n-1} \beta_k^2.
  \]
  Hence, set $c=\frac{\nu}{2\pi(1-\norm\alpha_1)(1+\norm\alpha_1)^2}$, 
  \begin{align*}
    \text{Var}\left[ \sum_{u=1}^{n-1}\beta_{n-u}X_u \right]
    =&\text{Var}\left[ \int_\mathbb R\beta_{n-\lfloor u \rfloor-1}1_{\{u<n\}}d\tilde N_u \right]\\
    =&\text{Var}\left[ \int_\mathbb R\phi(u)d\tilde{N}_u \right]\ge 2\pi c\sum_{k=1}^{n-1} \beta_k^2.
  \end{align*}
  Combine them together,
  \begin{equation*}
      \begin{aligned}
    \norm f_D^2
    \ge& (1-\theta)\mu^2+(1-\frac1\theta)\nu^2\frac1T\sum_{n=1}^T\left((n-1)\sum_{k=1}^{n-1}\beta_k^2+2\pi c\sum_{k=1}^{n-1}\beta_k^2\right)\\
    \ge& (1-\theta)\mu^2+\left[ (1-\frac1\theta)\nu^2\frac{T-1}{2}+\frac{2\pi c}{T} \right]\sum_{k=1}^{T-1}\beta_k^2.
  \end{aligned}
\end{equation*}
  Choose $\theta$ satisfying $(1-\frac1\theta)\nu^2\frac{T-1}{2}+\frac{2\pi c}{T}=\frac{\pi c}{T}$, i.e. 
  \[
    \theta = \frac{\nu T(T-1)(1+\norm \alpha_1)^2}{1+\nu T(T-1)(1+\norm \alpha_1)^2},
  \] then 
  \[\|f\|_D^2 \geq    \dfrac{1}{1+\nu T(T-1)(1+\| \alpha \|_1)^2}\mu^2 + 
     \dfrac{\nu  }{2 T (1-\| \alpha \|_1)(1+\| \alpha \|_1)^2}
     \sum_{k=1}^{T-1}\beta_k ^2.
  \]
  Finally we obtain 
  \[
    L^2 = \min \left\lbrace   \dfrac{1}{1+\nu T(T-1)(1+\| \alpha \|_1)^2},\dfrac{\nu  }{2 T (1-\| \alpha \|_1)(1+\| \alpha \|_1)^2}\right\rbrace.
  \]
  For the upper bound, from \eqref{expression1} we can see
  \begin{align*}
    \norm f_D^2 
    \le &\frac1T\sum_{n=1}^T\left\{ \left( \mu+\frac{\nu}{1-\norm \alpha_1}\sum_{k=1}^{n-1}\beta_k \right)^2+\mathbb E\left[ \left( \sum_{k=1}^{n-1}\beta_kX_{n-k} \right)^2 \right] \right\}.
  \end{align*}
  For the first term inside the curly braces on the RHS, it is bounded by the following
  \[
    \left( \mu+\frac{\nu}{1-\norm \alpha_1}\sum_{k=1}^{n-1}\beta_k \right)^2\le 2\mu^2+2\frac{\nu^2}{(1-\norm\alpha_1)^2}\left( \sum_{k=1}^{n-1}\beta_k \right)^2.
  \]
  By Lemma \ref{boundlemma}, 
  \[
    \mathbb{E}\left[ \left( \sum_{k=1}^{n-1} \beta_k X_{n-k} \right)^2 \right] \leq \frac{2\nu^2(1-\norm \alpha_1)+\nu}{(1-2\norm\alpha_2^2)(1-\norm\alpha_1)} \left( \sum_{k=1}^{n-1} \beta_k \right)^2.
  \]
  Hence,
 \begin{equation*}
    \begin{aligned}
    \norm f_D^2
    \le &2\mu^2+\left[ \frac{2\nu^2}{(1-\norm \alpha_1)^2}+\frac{2\nu^2(1-\norm\alpha_1)+\nu}{(1-2\norm \alpha_2^2)(1-\norm\alpha_1)} \right]\cdot \frac1T\sum_{n=1}^T\left( \sum_{k=1}^{n-1}\beta_k \right)^2\\
    \le &2\mu^2+\left[ \frac{2\nu^2}{(1-\norm \alpha_1)^2}+\frac{2\nu^2(1-\norm\alpha_1)+\nu}{(1-2\norm \alpha_2^2)(1-\norm\alpha_1)} \right]\frac1T\sum_{n=1}^T(n-1)\sum_{k=1}^{n-1}\beta_k^2\\
    \le &2\mu^2+\left[ \frac{2\nu^2}{(1-\norm \alpha_1)^2}+\frac{2\nu^2(1-\norm\alpha_1)+\nu}{(1-2\norm \alpha_2^2)(1-\norm\alpha_1)} \right]\left(\frac{T-1}{2}\right)\sum_{k=1}^{T-1}\beta_k^2.
    \end{aligned}
 \end{equation*}
  Finally we obtain,
  \[
    K^2 = \max\left\{ 2, \frac{T-1}{2}\left[ \frac{2\nu^2}{(1-\norm \alpha_1)^2}+\frac{2\nu^2(1-\norm\alpha_1)+\nu}{(1-2\norm \alpha_2^2)(1-\norm\alpha_1)} \right]\right\}.
  \]

\subsection{Proof of Theorem \ref{thm:maintheorem}}
\label{sec:appendix_proof_maintheorem}
  By the bilinear property of $D_T^2(f)$ and the Iterated expectation theorem, we obtain
  \begin{equation*}
    \begin{aligned}
    \mathbb{E}\left[\gamma_T(f)\right] = & \mathbb{E}\left[-\frac{2}{T}\sum_{n=1}^{T}\Phi_f(n)X_n\right] + \mathbb{E}\left[\frac{1}{T}\sum_{n=1}^{T}\Phi^2_f(n)\right] \\
    = & \mathbb{E}\left[-\frac{2}{T}\sum_{n=1}^{T}\Phi_f(n)\Phi_s(n)\right] + \mathbb{E}[D_T^2(f)] \\
    = & \mathbb{E}\left[-\frac{2}{T}\sum_{n=1}^{T}\Phi_f(n)\Phi_s(n)\right] + \|f\|_D^2\\
    = & \mathbb{E}\left[\frac{1}{T}\sum_{n=1}^{T}(\Phi_f(n)-\Phi_s(n))^2\right] - 
    \mathbb{E}\left[\frac{1}{T}\sum_{n=1}^{T}\Phi^2_s(n)\right]  \\
    = & \|f-s\|_D^2 - \|s\|_D^2.
    \end{aligned}
    \end{equation*}
  From Proposition \ref{prop:quadraticform}, $\norm \cdot_D$ is a norm. As a result, $\mathbb E[\gamma_T(f)]$ reaches its minimum when $f=s$.

\subsection{Proof of Theorem \ref{thm:consistency}}
\label{sec:appendix_proof_consistency}
The consistency of the least-squares estimator $\hat{\boldsymbol{\theta}}_T$ is established by verifying the conditions of the general theory for the consistency of extremum estimators (also known as M-estimators). We follow, for example, the framework laid out in \citet{newey1994large}. 

First, we state the necessary regularity conditions:
\begin{enumerate}
    \item \textbf{(A1)Identification:} The true parameter $s$ is the unique minimizer of the limiting objective function $Q(f) = \mathbb{E}[\gamma_T(f)]$ over the parameter space $\Theta$. This is satisfied by our Theorem 2.5.
    \item \textbf{(A2)Compactness:} We assume the parameter space $\Theta$ is a compact subset of $\mathfrak{l}^2$. 
    \item \textbf{(A3)Continuity:} The contrast function $\gamma_T(f)$ is a continuous function of $f \in \Theta$ for any given sample path. This is true by construction, as $\gamma_T(f)$ is a quadratic function of the parameters in $\theta$.
    \item \textbf{(A4)Uniform Convergence:} The sample contrast function $\gamma_T(f)$ converges uniformly in probability to its expectation $Q(f)$ over $\Theta$. That is:
    \[
      \sup_{f \in \Theta} | \gamma_T(f) - Q(f) | \xrightarrow{p} 0. 
    \]
    This condition is guaranteed by the Uniform Law of Large Numbers (ULLN), see e.g. \citet{peskir1994necessary}, which is the key assumption for proving consistency. Specifically, for stationary and ergodic sequences, as in our INAR($\infty$) model, \citet{peskir1994necessary} provide a set of necessary and sufficient conditions for the ULLN to hold. They show that properties such as ``eventual total boundedness in mean'' are equivalent to uniform convergence in probability, in mean, and almost surely. Our proof framework relies on these established theoretical results for stationary processes.
\end{enumerate}

\begin{proof}[Proof of Theorem \ref{thm:consistency}]
The proof proceeds by showing that the minimizer of $\gamma_T(f)$ must lie within an arbitrarily small neighborhood of $s$ as $T \to \infty$.

Let $N$ be an arbitrary open neighborhood of $s$ in $\Theta$. Let $N^c$ be the complement of $N$ in $\Theta$. Since $\Theta$ is compact and $N$ is open, $N^c$ is also compact.

From condition A1, we know that for any $f \in N^c$, $Q(f) > Q(s)$. Because $Q(f)$ is continuous and $N^c$ is compact, there exists a constant $\delta > 0$ such that $\inf_{f \in N^c} Q(f) \ge Q(s) + \delta$.

Now, consider the difference in the sample contrast function:
$$ \gamma_T(f) - \gamma_T(s) = (Q(f) - Q(s)) + (\gamma_T(f) - Q(f)) - (\gamma_T(s) - Q(s)). $$
Using the triangle inequality, we have:
$$ |(\gamma_T(f) - Q(f)) - (\gamma_T(s) - Q(s))| \le 2 \sup_{f \in \Theta} | \gamma_T(f) - Q(f) |. $$
From the ULLN (Condition A4), the right-hand side term converges to 0 in probability. This means that for a large enough $T$, the random term becomes negligible compared to the deterministic difference $Q(f) - Q(s)$.

Specifically, for any $f \in N^c$, we have $Q(f) - Q(s) \ge \delta$. With probability approaching 1, the random part will be smaller than $\delta/2$, which implies:
$$ \prob\left( \inf_{f \in N^c} \gamma_T(f) > \gamma_T(s) \right) \to 1 \quad \text{as } T \to \infty. $$
This statement means that, with probability approaching 1, the minimum value of the sample contrast function $\gamma_T$ over the set $N^c$ (everywhere outside the neighborhood of $s$) is strictly greater than its value at $s$.

Since the estimator $\hat{\boldsymbol{\theta}}_T$ is defined as the global minimizer of $\gamma_T(f)$ over $\Theta$, it must be that $\hat{\boldsymbol{\theta}}_T$ lies inside the neighborhood $N$ with probability approaching 1. As the neighborhood $N$ can be chosen to be arbitrarily small, this implies that $\hat{\boldsymbol{\theta}}_T$ converges in probability to $s$.

This formalizes the argument. The result is a direct application of, for example, Theorem 2.1 in \citet{newey1994large}.
\end{proof}

\subsection{Proof of Theorem \ref{thm:asymptotic_normality}}
\label{sec:appendix_proof_asymptotic_normality}
The proof relies on a first-order Taylor expansion of the estimator's First-Order Condition (FOC) and provides a heuristic justification for the normal approximation for large $T$.

\begin{enumerate}
    \item \textbf{First-Order Condition (FOC):} By definition, the LSE $\hat{\boldsymbol{\theta}}_T$ satisfies the FOC: 
    $$ \nabla \gamma_T(\hat{\boldsymbol{\theta}}_T) = \mathbf{0} $$

    \item \textbf{Taylor Expansion:} A mean-value expansion of the FOC around the true parameter $s$ gives:
    $$
    \mathbf{0} = \nabla \gamma_T(s) + \nabla^2 \gamma_T(\bar{\boldsymbol{\theta}}) (\hat{\boldsymbol{\theta}}_T - s)
    $$
    where $\bar{\boldsymbol{\theta}}$ is a point on the line segment between $\hat{\boldsymbol{\theta}}_T$ and $s$.

    \item \textbf{Rearrangement:} We can rearrange the expression to isolate the term of interest:
    $$
    \sqrt{T}(\hat{\boldsymbol{\theta}}_T - s) = - \left[ \nabla^2 \gamma_T(\bar{\boldsymbol{\theta}}) \right]^{-1} \left[ \sqrt{T} \nabla \gamma_T(s) \right]
    $$

    \item \textbf{Approximation of the Hessian (Definition of $\mathbf{J}_T$):}
    The Hessian matrix is $\nabla^2 \gamma_T(\boldsymbol{\theta}) = 2\mathbf{Y}$. By the Ergodic Theorem for stationary processes (see e.g. Theorem 24.1 in \citet{billingsley2012probability}), the sample matrix $\mathbf{Y}$ is a consistent estimator for its expectation, $\mathbb{E}[\mathbf{Y}]$. Since $\hat{\boldsymbol{\theta}}_T$ is consistent for $s$, $\bar{\boldsymbol{\theta}}$ is also consistent. We define the deterministic $T \times T$ matrix $\mathbf{J}_T$ as:
    $$
    \mathbf{J}_T := 2 \cdot \mathbb{E}[\mathbf{Y}]
    $$
    For large $T$, the random Hessian $\nabla^2 \gamma_T(\bar{\boldsymbol{\theta}})$ is thus close to $\mathbf{J}_T$ in probability.

    \item \textbf{Distribution of the Score (Definition of $\mathbf{K}_T$):}
    The scaled score vector, $\sqrt{T} \nabla \gamma_T(s)$, is the source of the randomness in the estimator. From the definition of $\gamma_T(f)$, its gradient with respect to the parameter vector $\boldsymbol{\theta}$ is $\nabla\gamma_T(\boldsymbol{\theta}) = 2(\mathbf{Y}\boldsymbol{\theta} - \mathbf{b})$. Evaluating this at the true parameter vector $\boldsymbol{\theta} = s$ gives $\nabla\gamma_T(s) = 2(\mathbf{Y}s - \mathbf{b})$. We can write the entire gradient vector compactly as:
    $$
    \nabla\gamma_T(s) = \frac{2}{T} \sum_{n=1}^T \mathbf{Z}_n \left( X_n - \Phi_s(n) \right)
    $$
    where $\mathbf{Z}_n = (1, X_{n-1}, X_{n-2}, \dots, X_1, 0, \dots)^\top$ is the vector of regressors available at time $n-1$.

    Let us define a vector sequence $\mathbf{d}_n = \mathbf{Z}_n (X_n - \Phi_s(n))$. This sequence forms a \textbf{martingale difference sequence (MDS)} with respect to the filtration $\mathcal{F}_{n-1}$, since:
    $$
    \mathbb{E}[\mathbf{d}_n | \mathcal{F}_{n-1}] = \mathbf{Z}_n \cdot \mathbb{E}[X_n - \Phi_s(n) | \mathcal{F}_{n-1}] = \mathbf{Z}_n \cdot 0 = \mathbf{0}.
    $$
    By applying a Martingale Central Limit Theorem (see e.g., \citet{hall2014martingale}) to the sum of this MDS, the scaled score vector is approximately normally distributed for large $T$:
    $$
    \sqrt{T} \nabla \gamma_T(s) \overset{\cdot}{\sim} N(\mathbf{0}, \mathbf{K}_T)
    $$
    where the matrix $\mathbf{K}_T$ is the $T \times T$ variance-covariance matrix of the scaled score vector, defined as:
    \[
    \mathbf{K}_T = \text{Var}(\sqrt{T}\nabla\gamma_{T}(s)).
    \]

    \item \textbf{Conclusion and Final Approximation:}
    We combine the results from the previous steps. By substituting the approximation for the Hessian and the approximate distribution for the score, we obtain the approximation for our estimator:
    $$
    \sqrt{T}(\hat{\boldsymbol{\theta}}_T - s) \approx - \mathbf{J}_T^{-1} \left[ \sqrt{T} \nabla \gamma_T(s) \right]
    $$
    Since the scaled score vector is approximately distributed as $N(\mathbf{0}, \mathbf{K}_T)$, its linear transformation by $-\mathbf{J}_T^{-1}$ is also approximately normal. The variance-covariance matrix of this resulting approximate distribution is:
    $$
    \text{Var}(-\mathbf{J}_T^{-1} \cdot N(\mathbf{0}, \mathbf{K}_T)) = \mathbf{J}_T^{-1} \text{Var}(N(\mathbf{0}, \mathbf{K}_T)) (\mathbf{J}_T^{-1})^{\top} = \mathbf{J}_T^{-1} \mathbf{K}_T \mathbf{J}_T^{-1}.
    $$ This yields the final expression for the approximate covariance matrix:
    \[
    \boldsymbol{\Sigma}_T = \mathbf{J}_T^{-1} \mathbf{K}_T \mathbf{J}_T^{-1}
    \]
    This derivation provides the explicit form for the approximate covariance matrix, justifying its use for statistical inference in large samples, as validated by the numerical experiments in Section \ref{sec:asymptotic_normality}.
\end{enumerate}




\bibliographystyle{elsarticle-num-names} 
\bibliography{references}

\begin{thebibliography}{18}
\expandafter\ifx\csname natexlab\endcsname\relax\def\natexlab#1{#1}\fi
\providecommand{\url}[1]{\texttt{#1}}
\providecommand{\href}[2]{#2}
\providecommand{\path}[1]{#1}
\providecommand{\DOIprefix}{doi:}
\providecommand{\ArXivprefix}{arXiv:}
\providecommand{\URLprefix}{URL: }
\providecommand{\Pubmedprefix}{pmid:}
\providecommand{\doi}[1]{\href{http://dx.doi.org/#1}{\path{#1}}}
\providecommand{\Pubmed}[1]{\href{pmid:#1}{\path{#1}}}
\providecommand{\bibinfo}[2]{#2}
\ifx\xfnm\relax \def\xfnm[#1]{\unskip,\space#1}\fi
\bibitem[{Kirchner(2016)}]{kirchner2016hawkes}
\bibinfo{author}{M.~Kirchner},
\newblock \bibinfo{title}{Hawkes and {INAR} ($\infty$) processes},
\newblock \bibinfo{journal}{Stoch. Process. Appl.} \bibinfo{volume}{126} (\bibinfo{year}{2016}) \bibinfo{pages}{2494--2525}.
\bibitem[{Hawkes(1971)}]{hawkes1971spectra}
\bibinfo{author}{A.~Hawkes},
\newblock \bibinfo{title}{Spectra of some self-exciting and mutually exciting point processes},
\newblock \bibinfo{journal}{Biometrika} \bibinfo{volume}{58} (\bibinfo{year}{1971}) \bibinfo{pages}{83--90}.
\bibitem[{Fokianos(2021)}]{fokianos2021multivariate}
\bibinfo{author}{K.~Fokianos},
\newblock \bibinfo{title}{Multivariate count time series modelling},
\newblock \bibinfo{journal}{Econ. Stat.} \bibinfo{volume}{31} (\bibinfo{year}{2021}) \bibinfo{pages}{100--116}.
\bibitem[{Huang and Khabou(2023)}]{huang2023nonlinear}
\bibinfo{author}{L.~Huang}, \bibinfo{author}{M.~Khabou},
\newblock \bibinfo{title}{Nonlinear poisson autoregression and nonlinear {Hawkes} processes},
\newblock \bibinfo{journal}{Stoch. Process. Appl.} \bibinfo{volume}{161} (\bibinfo{year}{2023}) \bibinfo{pages}{201--241}.
\bibitem[{Kirchner(2017)}]{kirchner2017estimation}
\bibinfo{author}{M.~Kirchner},
\newblock \bibinfo{title}{{An estimation procedure for the Hawkes process}},
\newblock \bibinfo{journal}{Quant. Finance} \bibinfo{volume}{17} (\bibinfo{year}{2017}) \bibinfo{pages}{571--595}.
\bibitem[{Reynaud-Bouret and Schbath(2010)}]{reynaud2010adaptive}
\bibinfo{author}{P.~Reynaud-Bouret}, \bibinfo{author}{S.~Schbath},
\newblock \bibinfo{title}{Adaptive estimation for {Hawkes} processes; application to genome analysis},
\newblock \bibinfo{journal}{Ann. Stat.} \bibinfo{volume}{38} (\bibinfo{year}{2010}) \bibinfo{pages}{2781--2822}.
\bibitem[{Du and Li(1991)}]{DuLi1991}
\bibinfo{author}{J.~G. Du}, \bibinfo{author}{Y.~Li},
\newblock \bibinfo{title}{The integer-valued autoregressive (inar(p)) model},
\newblock \bibinfo{journal}{J. Time Ser. Anal.} \bibinfo{volume}{12} (\bibinfo{year}{1991}) \bibinfo{pages}{129--142}.
\bibitem[{Zhang et~al.(2010)Zhang, Wang, and Zhu}]{Zhang2010}
\bibinfo{author}{H.~Zhang}, \bibinfo{author}{D.~Wang}, \bibinfo{author}{F.~Zhu},
\newblock \bibinfo{title}{Inference for inar(p) processes with signed generalized power series thinning operator},
\newblock \bibinfo{journal}{J. Stat. Plann. Inference} \bibinfo{volume}{140} (\bibinfo{year}{2010}) \bibinfo{pages}{667--683}.
\bibitem[{Klimko and Nelson(1978)}]{KlimkoNelson1978}
\bibinfo{author}{L.~A. Klimko}, \bibinfo{author}{P.~I. Nelson},
\newblock \bibinfo{title}{On conditional least squares estimation for stochastic processes},
\newblock \bibinfo{journal}{Ann. Stat.} \bibinfo{volume}{6} (\bibinfo{year}{1978}) \bibinfo{pages}{629--642}.
\bibitem[{Latour(1997)}]{Latour1997}
\bibinfo{author}{A.~Latour},
\newblock \bibinfo{title}{The multivariate ginar(p) process},
\newblock \bibinfo{journal}{Adv. Appl. Probab.} \bibinfo{volume}{29} (\bibinfo{year}{1997}) \bibinfo{pages}{228--248}.
\bibitem[{Lütkepohl(2005)}]{Lutkepohl2005}
\bibinfo{author}{H.~Lütkepohl}, \bibinfo{title}{New Introduction to Multiple Time Series Analysis}, \bibinfo{publisher}{Springer}, \bibinfo{address}{Berlin}, \bibinfo{year}{2005}.
\bibitem[{Oppenheim(1999)}]{oppenheim1999discrete}
\bibinfo{author}{A.~V. Oppenheim}, \bibinfo{title}{Discrete-time signal processing}, \bibinfo{publisher}{Pearson Education India}, \bibinfo{address}{Noida, India}, \bibinfo{year}{1999}.
\bibitem[{Edition et~al.(2002)Edition, Papoulis, and Pillai}]{edition2002probability}
\bibinfo{author}{F.~Edition}, \bibinfo{author}{A.~Papoulis}, \bibinfo{author}{S.~U. Pillai}, \bibinfo{title}{Probability, random variables, and stochastic processes}, \bibinfo{publisher}{McGraw-Hill Europe: New York}, \bibinfo{address}{USA}, \bibinfo{year}{2002}.
\bibitem[{Brémaud and Massoulié(2001)}]{bremaud2001hawkes}
\bibinfo{author}{P.~Brémaud}, \bibinfo{author}{L.~Massoulié},
\newblock \bibinfo{title}{Hawkes branching point processes without ancestors},
\newblock \bibinfo{journal}{J. Appl. Probab.} \bibinfo{volume}{38} (\bibinfo{year}{2001}) \bibinfo{pages}{122--135}.
\bibitem[{Newey and McFadden(1994)}]{newey1994large}
\bibinfo{author}{W.~K. Newey}, \bibinfo{author}{D.~McFadden},
\newblock \bibinfo{title}{Large sample estimation and hypothesis testing},
\newblock \bibinfo{journal}{Handbook of econometrics} \bibinfo{volume}{4} (\bibinfo{year}{1994}) \bibinfo{pages}{2111--2245}.
\bibitem[{Peskir and Weber(1994)}]{peskir1994necessary}
\bibinfo{author}{G.~Peskir}, \bibinfo{author}{M.~Weber},
\newblock \bibinfo{title}{Necessary and sufficient conditions for the uniform law of large numbers in the stationary case},
\newblock \bibinfo{journal}{Proc. Funct. Anal. IV (Dubrovnik 1993)} \bibinfo{volume}{43} (\bibinfo{year}{1994}) \bibinfo{pages}{165--190}.
\bibitem[{Billingsley(2012)}]{billingsley2012probability}
\bibinfo{author}{P.~Billingsley}, \bibinfo{title}{Probability and Measure}, \bibinfo{publisher}{Wiley}, \bibinfo{year}{2012}.
\bibitem[{Hall and Heyde(1980)}]{hall2014martingale}
\bibinfo{author}{P.~Hall}, \bibinfo{author}{C.~C. Heyde}, \bibinfo{title}{Martingale limit theory and its application}, \bibinfo{publisher}{Academic press}, \bibinfo{year}{1980}.

\end{thebibliography}
\end{document}